\newcommand{\q}[1]{``#1''}
\def \R{\mathbb{R}}
\DeclareMathOperator*{\esssup}{ess~sup}
\DeclareMathOperator*{\essinf}{ess~inf}
\def \N{\mathbb{N}}
\newtheorem{Theorem}{Theorem}[section]
\newtheorem{Corollary}[Theorem]{Corollary}
\newtheorem{Lemma}[Theorem]{Lemma}
\newtheorem{Definition}[Theorem]{Definition}
\newtheorem{Proposition}[Theorem]{Proposition}
\newtheorem{Remark}[Theorem]{Remark}
\numberwithin{equation}{section}
\newcommand\wavydecor{%
    \draw[decoration={coil,aspect=0.1,segment length=5pt,amplitude=1.0pt},decorate,line width=1.5pt,black]
      (O|-P) -- (O);
}
\newmdenv[
hidealllines=true,
innerleftmargin=10pt,
innerrightmargin=0pt,
innertopmargin=0pt,
innerbottommargin=0pt,
leftmargin=-10pt,
skipabove=.5\baselineskip,
skipbelow=.5\baselineskip,
singleextra={\wavydecor},
firstextra={\wavydecor},
secondextra={\wavydecor},
middleextra={\wavydecor}
]{done}
\title{ Utility Maximization Problem with Uncertainty and a Jump Setting\thanks{Acknowledgements:  The authors research is part of the ANR project DREAMeS (ANR-21-CE46-0002) and benefited from the support of the "Chair Risques Emergents en Assurance" under the aegis of Fondation du Risque, a joint initiative by Le Mans University and Cov\'ea.}}  
\author{Sarah Kaaka\"i~ \thanks
{\small Laboratoire Manceau de Math\'ematiques \& FR CNRS N\textsuperscript{o} 2962, Institut du Risque et de l'Assurance, Le Mans University.}
\and Anis Matoussi
\thanks{ \small  Laboratoire Manceau de Math\'ematiques \& FR CNRS N\textsuperscript{o} 2962, Institut du Risque et de l'Assurance, Le Mans University and Ecole Polytechnique.
}\\
 \and Achraf Tamtalini~ \thanks
{\small  Laboratoire Manceau de Math\'ematiques \& FR CNRS N\textsuperscript{o} 2962, Institut du Risque et de l'Assurance, Le Mans University.}
}
\date{}
\begin{document}
 \maketitle

  \abstract{   We study a robust utility maximization problem in the unbounded case with a general penalty term and information including jumps. We focus on time consistent penalties and we prove that there exists an optimal probability measure solution of the robust problem. Then, we characterize the dynamic value process of our stochastic control problem as the unique solution of a Quadratic-Exponential BSDE. 
  }

\bigskip

\noindent  {\bf Keywords}: Utility maximization, Robustness, Quadratic BSDEs with jumps, Time-consistent penalties, Bellman Martingale Optimality principle.

\section*{Introduction}
One of the major problems in asset pricing is the valuation in incomplete markets. In such markets, the decision maker/agent could use the well known utility maximization approach and the literature is particularly rich on the subject (see for example \citet{rouge2000pricing}, \citet{hu2005utility}, \citet{morlais2009quadratic} and \citet{carmona2008indifference} among many others). However, in many cases, the decision maker does not know the probability distribution (also called prior or model) governing the stochastic nature of the problem she/he is facing. Thus, before solving the utility maximization problem, the decision maker is faced with an intermediate problem of choosing an \q{optimal} probability. This type of problems are called robust utility maximization problems or utility maximization problems under model uncertainty. In the mathematical finance literature, we can find two types of approaches to solve robust utility maximization problems. The first one relies on convex duality methods which are presented in \citet{quenez2004optimal}, \citet{gundel2005robust}, \citet{schied2007optimal} and \citet{schied2005duality}. The second one, which we will follow in this article, is based on a stochastic control approach and the powerful tool of BSDEs.\\
In this article, this uncertainty is captured by considering a set of plausible probability measures that will be penalized through a penalty functional. This penalty functional will measure the distance between any plausible probability $Q$ and the reference/historical one denoted $P$. In \citet{anderson2003quartet} and \citet{hansen2006robust} for example,  a hedging problem was addressed by using the classical entropic penalty under a Markovian setting and hence Hamilton-Jacobi-Bellman (HJB) equations were derived in order to characterize the optimal strategies. The authors in \citet{skiadas2003robust} have followed the same point of view and obtained a BSDE that coincides with the one describing a stochastic differential utility (see also \citet{duffie1992stochastic} and \citet{duffie1994continuous} for more about stochastic differential utilities).\\
More recently, \citet{bordigoni2007stochastic} addressed a robust problem in a more general setting which is non Markovian by using stochastic control techniques. More precisely, they studied the following robust maximization problem:
\begin{equation}\label{bordigoni}
\underset{\pi}{\sup}~\underset{Q \in \mathcal{Q}}{\inf}\mathbb{U}(\pi, Q)
\end{equation}
where $\pi$ runs through a set of strategies and $Q \in \mathcal{Q}$ through a set of models. The simplest case corresponds to the case where the set $\mathcal{Q}$ is the singleton $\{P\}$ and $\mathbb{U}(\pi, P)$ is simply the $P$-expected utility from a non bounded terminal wealth and consumption/investment portfolio. The term $\mathbb{U}(\pi, Q)$ is the sum of $Q$-expected utility and an entropic penalization term. The set $\mathcal{Q}$ is assumed to have certain properties and usually does not need to be specified in any detail. Their work is cast in the case of a continuous filtration and the first minimization problem was solved by proving the existence of a unique optimal probability $Q^*$. They also characterized the value process of the stochastic control problem as the unique solution of a Quadratic BSDE (QBSDE). In the same spirit, \citet{faidi2013robust} studied the same problem using two type of penalties: the first one is the $f$-divergence penalties in the general framework of a continuous filtration and the second one is the time-consistent penalties studied in the context of a Brownian filtration. For the latter, they also characterized the value process as the unique solution of a QBSDE.\\
In this paper, we study the first minimization problem in \eqref{bordigoni} in the case of discontinuous filtration (where the information includes jumps) using time consistent penalties. Note that in our framework, the relative entropic penalty, as we will see further, is a special case of the class of time-consistent penalties. We first start by showing that the minimization problem in \eqref{bordigoni} is well posed and we prove the existence of an optimal probability $Q^*$ using a Koml\'os-type argument. Second, we prove that the value process of the minimization problem is described by a class of Quadratic-Exponential BSDE with jumps (QEBSDEJ) with unbounded terminal condition. We stress that for a given unbounded terminal condition, the study of Quadratic BSDEs is a difficult problem, see for instance \citet{briand2006bsde}, \citet{briand2008quadratic} and \citet{barrieu2013monotone} in the continuous framework and we emphasize that adding jumps to our optimization problem involves significant difficulties in solving the related BSDEs. \citet{karoui2016quadratic} have obtained existence result for this new class of BSDEs with jumps with unbounded terminal condition. However, they have showed uniqueness only in the bounded case. In this paper, we use the convexity of the generator of our BSDE to show the uniqueness of solution of the BSDE by extending the work of \citet{briand2008quadratic} in the Brownian setting.\\
The paper is structured as follows. Section \ref{framework} establishes the general framework, in which we assume the existence of a stochastic basis carrying a Brownian motion and a compensated integer-valued random measure that possesses a weak predictable representation property. In section \ref{estimates}, we give a number of estimates for subsequent use. We then prove with the help of Koml\'os theorem that there exists an optimal probability $Q^*$. Finally, in section \ref{BSDE}, we treat our optimization problem from a stochastic control point of view, and show, thanks to Bellman Optimality Principle, that the corresponding value process is the unique solution of a QEBSDEJ.

\section{Framework of the optimization problem}
\label{framework}
\subsection{Setting and notations}
This section sets out the notation and the assumptions that will be assumed to hold in the sequel. Let $(\Omega, \mathcal{F}, \mathbb{F}, P)$ be a filtered probability space with a finite time horizon $T < \infty$ and a filtration $\mathbb{F} = (\mathcal{F}_t)_{t\in[0, T]}$ satisfying the usual conditions of right continuity and completeness, in which all semimartingales are taken to have right continuous paths with left limits. We assume that that $\mathcal{F}_0$ is trivial and $\mathcal{F} = \mathcal{F}_T$. On this stochastic basis, let $W$ be a $d$-dimensional standard Brownian motion and let $\mu(dt, dx) = (\mu(w, dt, dx)|w \in \Omega)$ denote an integer-random valued measure on $([0, T] \times E, \mathcal{B}([0,T]) \otimes \mathcal{E})$ with compensator $\nu \vcentcolon = \nu^P(w, dt, dx)$ under $P$, where $E \vcentcolon=\R^d \backslash \{0\}$ is equipped with its Borel $\sigma$-field $\mathcal{E} \vcentcolon= \mathcal{B}(E)$.\\
On $(\widetilde{\Omega}, \widetilde{\mathcal{F}}) \vcentcolon= (\Omega \times [0,T] \times E, \mathcal{F} \otimes \mathcal{B}[0, T]\otimes\mathcal{E})$, we define the measure $P\times \nu$ by
\begin{equation}
P \times \nu(\widetilde{B}) = E\left[\int_{[0,T]\times E}\mathbbm{1}_{\widetilde{B}}(w, t, x)\nu(w, dt, dx)\right],~\widetilde{B} \in \widetilde{\mathcal{F}}.
\end{equation}
We denote by $\widetilde{\mathcal{P}}\vcentcolon = \mathcal{P}\otimes \mathcal{E}$ where $\mathcal{P}$ is the predictable $\sigma$-field on $\Omega \times [0,T]$. We say that a function on $\widetilde{\Omega}$ is \textit{predictable} if it is $\widetilde{\mathcal{P}}$-measurable.\\
We will assume that the compensator $\nu$ is absolutely continuous with respect to the $\lambda \otimes dt$ with a density $\xi$:
\begin{equation}
\label{nu}
\nu(w, dt, dx) = \xi_t(w, x) \lambda(dx)dt,
\end{equation}
where $\lambda$ is a $\sigma$-finite  measure on $(E, \mathcal{E})$, that satisfies the following condition: $\int_E 1 \wedge |x|^2 \lambda(dx) < \infty$ and the density $\xi$ is $\widetilde{\mathcal{P}}$-measurable, positive and bounded:
\begin{equation}
\label{zeta}
0 < \xi_t(w, x) \le C_\nu < \infty,~P\times\lambda(dx) \times dt-a.e.~~~\text{for some constant}~C_\nu.
\end{equation}
Note that, thanks to \eqref{nu}, we have that $\nu(\{t\}\times E)=0$ for all $t$, and $\nu([0, T]\times E)\le C_\nu T \lambda(E)$.\\
For $\psi$ a predictable function on $\widetilde{\Omega}$, we define its integral process with respect to $\mu$  as:
\begin{equation}
\label{integral}
(\psi\star	\mu)_t \vcentcolon=
\left\{
\begin{aligned}
 &\int_{[0,t]\times E} \psi_t(x) \mu(w, ds, dx)~~~\text{if} \int_{[0,t]\times E} |\psi_t(w, x)| \mu(w, ds, dx) < \infty,\\
 &+ \infty ~~~~~~~~~~~~~~~~~~~~~~~~~~~~~~~~~~\text{otherwise}.
\end{aligned}\right.
\end{equation}
In the same way, we define the integral process with respect to $\nu$.\\
Let $\widetilde{\mu}$ be the compensated measure of $\mu$
\begin{equation}
\widetilde{\mu}(w, dt, dx) = \mu(w, dt, dx) - \nu(w, dt, dx).
\end{equation}
To alleviate the notations, we will omit the dependence on $w$ in the different stochastic quantities. In the following, we recall some properties that can be found in \citet{becherer2006bounded} or \citet{jacod2013limit}. First, for any predictable function $\psi$, the process $\psi\star \nu$ is a predictable process whereas $\psi \star \mu$ is an optional process. We recall that $E[|\psi|\star \mu_T]=E[|\psi|\star \nu_T]$. If $(|\psi|^2\star \mu)^{1/2}$ is locally integrable, then $\psi$ is integrable with respect to $\widetilde{\mu}$ and $\psi\star\widetilde{\mu}$ is defined as the purely discontinuous local martingale (under $P$) with jump process $\int_E U \mu(\{t\}, dx)$. If the increasing process $|\psi|\star \mu$ (or equivalently, $|\psi|\star \nu$) is locally integrable, then again, $\psi$ is integrable with respect to $\widetilde{\mu}$ and is the purely discontinuous local martingale as in the first case and we have $\psi\star \widetilde{\mu} = \psi \star \mu - \psi \star \nu$. Finally, if the process $|\psi|^2\star \nu$ is integrable, then $U$ is integrable with respect to $\widetilde{\mu}$ and $\widetilde{Z} \star \widetilde{\mu}$ is a square integrable martingale, purely discontinuous, with predictable quadratic variation $\langle \psi\star \widetilde{\mu} \rangle = |\psi|^2\star \nu$. These properties and their proofs can be found in Section II.1.d of \citet{jacod2013limit}.\\
We will assume that $W$ and $\widetilde{\mu}$ satisfy the following weak representation property with respect to $P$ and $\mathcal{F}$: Every local martingale $M$ with respect to $(\mathcal{F}, P)$ admits the following decomposition:
\begin{equation}
\label{representation}
M_t = M_0 + (\eta\cdot W)_t +(\psi \star \widetilde{\mu})_t\vcentcolon= M_0 + \int_0^t \eta_s dW_s + (\psi \star \widetilde{\mu})_t,~~~~ \forall t \ge 0,~P-a.s.
\end{equation}
where $\eta$ is a progressively measurable process and $\psi$ a predictable process such that
\begin{equation}
\int_0^T |\eta_s|^2 ds < \infty, ~~ (|\psi|^2\star \nu)_T < \infty,~P-a.s.
\end{equation}
We introduce the following spaces:
\begin{itemize}
\item $L^{\exp}$ is the space of all $\mathcal{F}_T$-measurable random variables $X$ such that:
$$ E[\exp(\gamma |X|)] < \infty,~~\forall \gamma > 0.$$
\item $\mathcal{D}_0^{\exp}$ is the space of progressively measurable processes $(X_t)_{0\le t \le T}$ with
$$ E\left[\exp\left(\gamma~ \underset{0 \le t \le T}{\esssup} |X_t|\right)\right] < \infty,~~\forall \gamma > 0.$$
\item $\mathcal{D}_1^{\exp}$ is the space of progressively measurable processes $(X_t)_{0\le t \le T}$ with
$$ E\left[\exp\left(\gamma \int_0^T |X_s| ds \right)\right] < \infty,~~\forall \gamma > 0.$$
\item $\mathcal{H}^{2, p}_\lambda$ is the space of predictable processes $\psi$ such that
$$ E\left[\left(\int_0^T |\psi|^2_{s, \lambda} ds \right)^{\frac{p}{2}}\right]^{\frac{1}{p}} < \infty,$$
where
$$|\psi|^2_{s, \lambda} \vcentcolon= \int_E |\psi_s(x)|^2 \xi_s(x)\lambda(dx).$$
\item $H^{2, p}$ is the set of all predictable processes $\eta$ such that
$$ E\left[ \left( \int_0^T |\eta_s|^2 ds\right)^{\frac{p}{2}}\right]^{\frac{1}{p}} < \infty.$$
\end{itemize}
\subsection{The optimization problem}
For every probability $Q \ll P$ on $\mathcal{F}_T$, we denote by $D = (D_t)_{0 \le t \le T}$ its Radon-Nikodym density with respect to $P$, that is,
$$D_t = E\left[\left.\frac{dQ}{dP} \right|\mathcal{F}_t\right],~~ t \ge 0.$$
$D$ is a c\`adl\`ag nonnegative $P$-martingale. Let $\tau_n \vcentcolon=\inf\{t \ge 0, D_t \le 1/n\}$ and consider the local martingale $M^n_t = \int_0^{t\wedge \tau_n} D_{s^-}^{-1} dD_s$. Thanks to the weak representation property, there exist two predictable processes $(\eta^n_s)$ and $(\psi^n_s)$, $s \le \tau_n$, such that,
$\int_0^{t \wedge \tau_n} |\eta_s^n|^2 ds < \infty$ and $(|\psi)|^2\star \nu)_{t\wedge \tau_n} < \infty$ and
$$D_{t \wedge \tau_n} = \mathcal{E}\left((\eta^n\cdot W)_. + (\psi^n\star \widetilde{\mu})_.\right)_{t \wedge \tau_n},~~ t\ge 0, ~~P-a.s.$$ 
Consistency requires that we should have $\eta_t^{n} = \eta_t^{n+1}$ $dt\otimes dP$-a.e and $ \psi^n_t(x)  = \psi^{n+1}_t(x)$ $\nu(dx) \otimes dt\otimes dP$-a.e on $\{t \le \tau_n \wedge T\}$. By the fact that $\tau_n \nearrow \infty$ $Q$-a.s., we obtain the existence of $Q$-a.s. defined predictable processes $\eta$ and $\psi$ such that,
\begin{equation}
\label{density0}
D_t = \mathcal{E}\left((\eta\cdot W)_. + (\psi\star \widetilde{\mu})_.\right)_{t},~~ t\ge 0, ~~Q-a.s.
\end{equation}
where $\int_0^T |\eta_s|^2 ds < \infty$ and $(|\psi|^2\star \nu)_T < \infty~~Q-a.s.$. Note that since for all $t \in [0, T]$, $D_t > 0~Q$-a.s., then we must have for every $t \in [0, T]$,  $\psi_t(x) > -1$ $dQ\times \nu(dx, dt)$-a.e. and we can rewrite $(D_t)$ as in the following:
\begin{equation}
\label{density}
D_t = \exp\left((\eta\cdot W)_t + (\psi\star\widetilde{\mu})_t -\frac{1}{2} \int_0^t |\eta_s|^2 ds + ((\ln(1+\psi)-\psi)\star\mu)_t\right)~~ Q-a.s.
\end{equation}
We now introduce the following time consistent penalty for a probability $Q \ll P$ on $\mathcal{F}_T$:
\begin{equation}
\label{penalty}
\gamma_t(Q) \vcentcolon= E_Q\left[\left.\int_t^T r(s, w, \eta_s, \psi_s) ds \right|\mathcal{F}_t\right],
\end{equation}
where $r:[0, T]\times \Omega \times \R^d\times L^2(E, \lambda) \to [0, +\infty]$ is a suitable measurable function that is convex and lower-semicontinuous in $(\eta, \psi)$ and such that $r(t,0,0)=0$. Note that, since $r$ is non-negative, $r$ is minimal at $\eta =0$ and $\psi=0$ and this corresponds to the probabilistic model $P$. Therefore, the reference probability has the highest plausibility.  In the following, we will consider probabilities $Q \in \mathcal{Q}^f$ where
\begin{equation}
\label{Q}
\mathcal{Q}^f=\{Q \ll P, \gamma_0(Q) < \infty\}.
\end{equation}
In order to solve the stochastic control problem with BSDEs, we need to impose some regularity and growth conditions on the penalty function. In a Brownian setting,  \citet{faidi2013robust} assumed the penalty function to be bounded from below by the relative entropy. In the same way, we will assume that there exists $\widetilde{K}_2, \widetilde{K}_1 > 0$ such that,
$$\gamma_0(Q) \ge -\widetilde{K}_2 + \widetilde{K}_1 H(Q|P).$$ 
Let $f$ be the function defined as follow:
\begin{equation}
\label{f}
f(x) = 
\left\{
\begin{aligned}
&(1+x)\log(1+x) - x,~~\text{if}~x\ge -1;\\
&\infty~~~~~~~~~~~~~~~~~~~~~~~~~~~~\text{otherwise};
\end{aligned}\right.
\end{equation}
For the latter inequality to be verified, a sufficient condition on $r$ will be the following:
\begin{enumerate}[label=($\mathcal{A}$\textsubscript{r})]
\item There exists $K_1, K_2 > 0$ such that for all $w\in \Omega$, $t\in [0, T]$, $\eta \in \R^d$ and $\psi \in L^2(\mathcal{E}, \lambda;\R)$, we have,
$$r(t,w,\eta,\psi) \ge -K_2 + K_1\left(\frac{|\eta|^2}{2} + \int_E f(\psi(x))\xi_t(w,x)\lambda(dx)\right).$$
\label{Ar}
\end{enumerate}
The following proposition shows that the entropic penalty can be retrieved with a special choice for $r$. A detailed proof is given in the Appendix \ref{Appendix1}.
\begin{Proposition}
Let $r(t, \eta, \psi) = \frac{1}{2}|\eta|^2+\int_E f(\psi(x))\xi_t(x)\lambda(dx)$ and $Q \in \mathcal{Q}_f$. Then, the penalty function corresponds to the relative entropy, that is
$$\gamma_0(Q)\underset{(\Delta)}{=}E_Q\left[\int_0^T \left(\frac{|\eta_t|^2}{2} + \int_E f(\psi_t(x))\xi_t(x)\lambda(dx)\right)dt\right] = H_0(Q|P),$$
where,
\begin{equation}
H_t(Q|P) = E_Q\left[\log\left(\left.\frac{dQ}{dP}\right)\right|\mathcal{F}_t\right].
\end{equation}
Moreover, we have for a general $r$ verifying \ref{Ar}, 
\begin{equation}
\label{entropy}
H(Q|P) \le  \frac{\gamma_0(Q)}{K_1} +\frac{TK_2}{K_1}.
\end{equation}
In particular, $H(Q|P)$ is finite for all $Q \in \mathcal{Q}_f$.
\end{Proposition}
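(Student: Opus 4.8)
The plan is to establish the identity $(\Delta)$ first, and then to read off the general bound \eqref{entropy} as an essentially immediate consequence of it. I would start from the explicit Dol\'eans--Dade form \eqref{density} of the density and take logarithms, writing
\[
\log D_T = (\eta\cdot W)_T + (\psi\star\widetilde{\mu})_T - \tfrac12\int_0^T|\eta_s|^2\,ds + \big((\ln(1+\psi)-\psi)\star\mu\big)_T .
\]
Since $H_0(Q|P)=E_Q[\log D_T]$, the task reduces to computing the $Q$-expectation of the right-hand side. Here I would invoke Girsanov's theorem, both for the Brownian part and for the random measure: under $Q$ the process $W^Q_t:=W_t-\int_0^t\eta_s\,ds$ is a Brownian motion and the $Q$-compensator of $\mu$ is $\nu^Q:=(1+\psi)\,\nu$. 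Writing $\widetilde{\mu}^{Q}:=\mu-\nu^{Q}$ and rewriting each stochastic integral as a $Q$-local martingale plus its $Q$-drift, one gets $(\eta\cdot W)_t=(\eta\cdot W^Q)_t+\int_0^t|\eta_s|^2\,ds$, $(\psi\star\widetilde\mu)_t=(\psi\star\widetilde\mu^Q)_t+(\psi^2\star\nu)_t$ and $\big((\ln(1+\psi)-\psi)\star\mu\big)_t=\big((\ln(1+\psi)-\psi)\star\widetilde\mu^Q\big)_t+\big((\ln(1+\psi)-\psi)(1+\psi)\star\nu\big)_t$.

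The decisive step is the algebraic cancellation of the drift terms. Collecting them and using $\nu=\xi\,\lambda\,dt$, the two Brownian contributions combine to $\tfrac12|\eta|^2$, while the jump contributions combine, through the pointwise identity $\psi^2+(\ln(1+\psi)-\psi)(1+\psi)=(1+\psi)\ln(1+\psi)-\psi=f(\psi)$, to $\int_E f(\psi(x))\,\xi_t(x)\,\lambda(dx)$. This yields a decomposition $\log D_t=N_t+A_t$, where $N$ is a $Q$-local martingale with $N_0=0$ and $A_t:=\int_0^t\big(\tfrac12|\eta_s|^2+\int_E f(\psi_s(x))\xi_s(x)\lambda(dx)\big)\,ds$ is nonnegative and nondecreasing; its integrand is exactly the one appearing in $(\Delta)$. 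This is precisely where the specific form of $f$ and of the density \eqref{density} are used.

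It remains to take expectations, and this is where I expect the main obstacle to lie. Localizing $N$ by stopping times $\sigma_n\uparrow T$ along which $N^{\sigma_n}$ is a true $Q$-martingale gives $E_Q[\log D_{T\wedge\sigma_n}]=E_Q[A_{T\wedge\sigma_n}]$ for every $n$. Passing to the limit on the right is immediate by monotone convergence, since $A$ is nonnegative and nondecreasing, so $E_Q[A_{T\wedge\sigma_n}]\uparrow E_Q[A_T]$. The delicate point is the left-hand side: I would use that the relative entropy $H(Q|P)=E_Q[\log D_T]=E_P[D_T\log D_T]$ is always well defined in $[0,+\infty]$ (since $x\mapsto x\log x$ is bounded below), and that $H_t(Q|P)=E_Q[\log D_T\mid\mathcal F_t]$ is a genuine $Q$-martingale of constant expectation; comparing it with $\log D_{T\wedge\sigma_n}$ through the submartingale structure $\log D=N+A$ forces $E_Q[\log D_{T\wedge\sigma_n}]\to H_0(Q|P)$, whence $H_0(Q|P)=E_Q[A_T]$, which is $(\Delta)$. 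The assumption $Q\in\mathcal Q_f$ ensures $\gamma_0(Q)<\infty$, so all the quantities involved are finite and every interchange of limit and expectation is licit.

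Finally, for a general $r$ satisfying \ref{Ar}, no recomputation is needed: the equality $E_Q[A_T]=H_0(Q|P)$ in the middle of $(\Delta)$ depends only on the structure of the density, not on $r$. Integrating the pointwise lower bound of \ref{Ar} against $dQ\otimes ds$ and using this equality gives $\gamma_0(Q)=E_Q\big[\int_0^T r(s,\eta_s,\psi_s)\,ds\big]\ge -K_2T+K_1\,E_Q[A_T]=-K_2T+K_1\,H(Q|P)$, and rearranging yields \eqref{entropy}. Finiteness of $H(Q|P)$ on $\mathcal Q_f$ then follows at once, since $\gamma_0(Q)<\infty$.
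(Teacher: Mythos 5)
Your algebraic skeleton is the same as the paper's (Appendix, Lemma \ref{lem1} and Proposition \ref{specialCase}): take the logarithm of the Dol\'eans--Dade density, pass to the $Q$-dynamics via Girsanov, and observe that the drift terms recombine pointwise into $\tfrac12|\eta|^2+\int_E f(\psi(x))\xi_t(x)\lambda(dx)$. But the step you yourself flag as ``the main obstacle'' is where the argument actually breaks. After localizing you have $E_Q[\log D_{T\wedge\sigma_n}]=E_Q[A_{T\wedge\sigma_n}]\uparrow E_Q[A_T]=\gamma_0(Q)$, and you claim the submartingale structure of $\log D=N+A$ ``forces'' $E_Q[\log D_{T\wedge\sigma_n}]\to H_0(Q|P)$. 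It does not: optional sampling for the $Q$-submartingale $\log D$ (which is dominated by the $Q$-martingale $H_t(Q|P)$ by Jensen) only yields $E_Q[\log D_{T\wedge\sigma_n}]\le H_0(Q|P)$, hence in the limit $\gamma_0(Q)\le H(Q|P)$. That is precisely the \emph{useless} direction for \eqref{entropy}, which requires an upper bound on $H(Q|P)$ by $\gamma_0(Q)$. The reverse inequality is equivalent to showing that the local martingale part $N$ does not lose mass at $T$, i.e.\ that $E_Q[N_T]=0$ (or at least $\ge 0$), and localization alone cannot deliver this: a local martingale with $E_Q[N_{T\wedge\sigma_n}]=0$ for all $n$ need not satisfy $E_Q[N_T]=0$ without a uniform integrability argument, which you never supply.

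This is exactly the content of the paper's Lemma \ref{lem1}, which is the missing ingredient in your proposal: using $\gamma_0(Q)<\infty$ together with the elementary inequalities $f(x)\ge\tfrac16(1+x)\log^2(1+x)$ for $-1\le x\le e^2-1$ and $(1+x)\log(1+x)\le 2f(x)$ for $x>e^2-1$, one shows $\log(1+\psi)\mathbbm{1}_{\psi\le e^2-1}\in L^2(dQ\times\nu^Q)$ and $\log(1+\psi)\mathbbm{1}_{\psi> e^2-1}\in L^1(dQ\times\nu^Q)$, whence $\int_0^\cdot\int_E\log(1+\psi_s(x))\widetilde{\mu}^Q(ds,dx)$ and $\int_0^\cdot\eta_s\,dW^Q_s$ are \emph{true} $Q$-martingales; only then may one take expectations termwise. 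A secondary but real issue: your untruncated rewriting $(\psi\star\widetilde\mu)_t=(\psi\star\widetilde\mu^Q)_t+(\psi^2\star\nu)_t$ presumes that each piece is separately well defined, which is not automatic for a general $\psi$ with $(|\psi|^2\star\nu)_T<\infty$ only $Q$-a.s.; the paper avoids this by first truncating to $\psi_{m,s}(x)=\psi_s(x)\mathbbm{1}_{\psi_s(x)\le m}\mathbbm{1}_{|x|\ge 1/m}$, performing the decomposition for $\psi_m$, and passing to the limit by monotone convergence. Your final paragraph (deducing \eqref{entropy} from \ref{Ar} and the identity) is correct, but it is conditional on the identity $(\Delta)$, which your argument does not establish.
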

\begin{Remark}\
Let $r^*(t, z, \widetilde{z}) = \underset{\eta \in \R^d, \psi \in L^2(\lambda)}{\sup} (z\cdot \eta + \int_E \widetilde{z}(x) \psi(x) \lambda(dx) - r(t, \eta, \psi))$ the Fenchel conjugate $r$. Assumption \ref{Ar} implies that, for $w \in \Omega$, $z \in \R^d$, $t\in [0, T]$ and $\widetilde{z} \in L^2(\mathcal{E}, \lambda;\R)$,
\begin{equation}
\label{r*}
r^*(t,w, z, \widetilde{z}) \le K_2 + \frac{|z|^2}{2K_1} + K_1\int_E f^*\left(\frac{\widetilde{z}(x)}{K_1\xi_t(x)}\right)\xi_t(x) \lambda(dx),
\end{equation}
where $f^*(x) \vcentcolon= e^x - x - 1$ is the Fenchel conjugate of the function $f$.
\end{Remark}
\noindent
Now, given a positive adapted process $\delta$, we define the discounting process:
$$ S_t^\delta \vcentcolon = \exp\left(-\int_0^t \delta_s ds\right),~~~~ 0 \le t \le T,$$
and the auxiliary quantities,
\begin{align*}
\mathcal{U}_{t,T}^\delta &\vcentcolon= \alpha \int_t^T \frac{S_s^\delta}{S_t^\delta}U_s ds + \overline{\alpha}\frac{S_T^\delta}{S_t^\delta}\overline{U}_T,~~ 0 \le t \le T,~~\alpha, \overline{\alpha} \ge 0,\\
\mathcal{R}_{t,T}^\delta(Q) &\vcentcolon=\int_t^T \frac{S_s^\delta}{S_t^\delta} r(s, \eta_s, \psi_s)ds,~~ 0 \le t \le T.
\end{align*}
Now we consider the cost functional
\begin{equation}
\label{cost}
c(w, Q) \vcentcolon=\mathcal{U}_{0,T}^\delta(w) + \beta \mathcal{R}_{0,T}(Q)(w),
\end{equation}
which consists of two terms. The first one is a discounted utility term that is the sum of a final utility $\overline{U}_T$ and a continuous utility with utility rate $(U_s)$. For instance, $(U_s)$ can be seen as the utility coming from investing/consuming and $\overline{U}_T$ as the utility coming from the terminal wealth. The second term is simply the penalty term and measure the \q{distance} between the probability $Q$ and the reference probability $P$. The parameter $\beta$ might be viewed as measuring the degree of confidence of the reference probability $P$. The higher $\beta$ is, the more confident we are in $P$, with the limiting case $\beta \uparrow \infty$ (respectively $\beta \downarrow 0$) corresponding to full degree of confidence (respectively distrust).\\
Our objective is to solve the following optimization problem:
\begin{equation}
\label{optProblem}
\text{Minimize the functional}~~Q\mapsto\Gamma(Q) \vcentcolon= E_Q[c(., Q)],
\end{equation}
over the set $\mathcal{Q}_f$.
To guarantee the well-posedeness of the problem, we will assume the following:
\begin{enumerate}[label=($\mathcal{A}$\textsubscript{u})]
\item
\begin{enumerate}[label = \roman*.]
\label{Au}
\item The discounting rate is bounded by some constant $||\delta||_\infty$;
\item The process $U$ belongs to $D_1^{\exp}$;
\item The terminal utility $\overline{U}_T$ belongs to $L^{\exp}$.
\end{enumerate}
\end{enumerate}
\begin{Remark}
\label{integrability}
Under assumption \ref{Au}, we have
\begin{equation}
\label{Lexp}
E\left[\exp\left(\lambda \int_0^T|U_s|ds+ \mu |\overline{U}_T|\right)\right] < \infty,~~\forall \lambda, \mu \in \R^+ .
\end{equation}
Indeed, using the convexity of the exponential function, we get,
\begin{align*}
E\left[\exp\left(\lambda\int_0^T|U_s|ds +\mu\overline{U}_T\right)\right] &= E\left[\exp\left(\frac{1}{2}\times 2\lambda\int_0^T|U_s|ds +\frac{1}{2}\times 2\mu\overline{U}_T\right)\right]\\
&\le E\left[\frac{1}{2}\exp\left(2\lambda \int_0^T |U_s|ds\right) +\frac{1}{2}\exp\left(2\mu|\overline{U}_T|\right)\right]\\
&= \frac{1}{2}E\left[\exp\left(2\lambda \int_0^T |U_s|ds\right)\right] + \frac{1}{2}E\left[\exp\left(2\mu|\overline{U}_T|\right)\right] < \infty,
\end{align*}
where the finiteness of the two last expectations is due to assumption \ref{Au}.
\end{Remark}
\section{Some helpful estimates and existence of optimal probability}
\label{estimates}
\subsection{Auxiliary estimates}
The main objective of this section is to prove the existence of an optimal probability $Q^*$ that minimizes the functional $\Gamma$. To achieve this, we start by proving some useful auxiliary estimates. We will adapt the steps in \citet{bordigoni2007stochastic} and the inequalities therein into our setting.
\begin{Proposition}
Under assumption \ref{Ar} and \ref{Au}, there exists a constant $C \in(0, \infty)$ which depends only on $\alpha, \overline{\alpha}, \beta, \delta, T, U, \overline{U}_T$ such that
\begin{equation}
\label{upperBoundGamma}
\Gamma(Q) \le E_Q[|c(\cdot, Q)|]\le C(1+\gamma_0(Q)),~~~\text{for all}~Q \in \mathcal{Q}_f.
\end{equation}
In particular, this shows that $\Gamma(Q)$ is well defined and finite for every $Q \in \mathcal{Q}_f$.
\end{Proposition}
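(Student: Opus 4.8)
The plan is to bound $E_Q[|c(\cdot,Q)|]$ by splitting the cost into its utility part and its penalty part and handling each separately; the inequality $\Gamma(Q)=E_Q[c(\cdot,Q)]\le E_Q[|c(\cdot,Q)|]$ is then immediate from $x\le |x|$. Since $\beta\ge 0$, I would use the pointwise bound $|c(\cdot,Q)|\le |\mathcal{U}_{0,T}^\delta|+\beta\,|\mathcal{R}_{0,T}^\delta(Q)|$, so that it suffices to control the two $Q$-expectations $E_Q[|\mathcal{U}_{0,T}^\delta|]$ and $E_Q[|\mathcal{R}_{0,T}^\delta(Q)|]$.

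The penalty term is the easy one. Because $\delta$ is a positive process, the discount factor satisfies $0<S_s^\delta\le 1$, and since $r$ is non-negative by construction one gets $|\mathcal{R}_{0,T}^\delta(Q)|=\int_0^T S_s^\delta\, r(s,\eta_s,\psi_s)\,ds\le \int_0^T r(s,\eta_s,\psi_s)\,ds$ pointwise. Taking $Q$-expectations and recalling the definition \eqref{penalty} of the penalty yields directly $E_Q[|\mathcal{R}_{0,T}^\delta(Q)|]\le \gamma_0(Q)$.

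The utility term is where the real work lies. Using once more $S_0^\delta=1$ and $0<S_s^\delta\le 1$, I would bound $|\mathcal{U}_{0,T}^\delta|\le \alpha\int_0^T|U_s|\,ds+\overline{\alpha}\,|\overline{U}_T|$ pointwise. The main obstacle is that this is a $Q$-expectation of a quantity whose integrability is only known under $P$, through the exponential moments of assumption \ref{Au} and Remark \ref{integrability}. The tool to pass from $Q$ to $P$ is the Fenchel/Young inequality for relative entropy, $E_Q[X]\le \log E_P[e^{X}]+H(Q|P)$, valid whenever $E_P[e^{X}]<\infty$. Applying it with $X=\alpha\int_0^T|U_s|\,ds+\overline{\alpha}\,|\overline{U}_T|$ gives $E_Q[|\mathcal{U}_{0,T}^\delta|]\le c_1+H(Q|P)$, where the constant $c_1:=\log E_P\big[\exp(\alpha\int_0^T|U_s|\,ds+\overline{\alpha}\,|\overline{U}_T|)\big]$ is finite and depends only on $\alpha,\overline{\alpha},T,U,\overline{U}_T$ by \eqref{Lexp}.

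It then remains to reabsorb the entropy. The bound \eqref{entropy} from the previous proposition gives $H(Q|P)\le \gamma_0(Q)/K_1+TK_2/K_1$. Collecting the three estimates,
$$E_Q[|c(\cdot,Q)|]\le c_1+\frac{TK_2}{K_1}+\Big(\frac{1}{K_1}+\beta\Big)\gamma_0(Q),$$
so taking $C$ to be the larger of the additive constant $c_1+TK_2/K_1$ and the coefficient $1/K_1+\beta$ yields $E_Q[|c(\cdot,Q)|]\le C(1+\gamma_0(Q))$, with $C$ depending only on the listed quantities. Finiteness of $\Gamma(Q)$ for $Q\in\mathcal{Q}_f$ is then automatic since $\gamma_0(Q)<\infty$ there by definition \eqref{Q}. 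I expect the only delicate point to be the justification of the entropy inequality and the verification that its right-hand side is finite, which is exactly what the exponential-integrability assumption \ref{Au} guarantees.
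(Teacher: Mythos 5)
Your proposal is correct and follows essentially the same route as the paper: the same decomposition of $|c(\cdot,Q)|$ into the utility part bounded by $\mathbb{U}=\alpha\int_0^T|U_s|\,ds+\overline{\alpha}|\overline{U}_T|$ and the penalty part bounded by $\gamma_0(Q)$, followed by entropy duality and the bound \eqref{entropy} to reabsorb $H(Q|P)$ into $\gamma_0(Q)$. The only (immaterial) difference is that you invoke the variational form $E_Q[X]\le \log E_P[e^{X}]+H(Q|P)$, whereas the paper uses the pointwise Young inequality $xy\le x\log x+e^{y-1}$ to get $E_Q[\mathbb{U}]\le H(Q|P)+e^{-1}E[e^{\mathbb{U}}]$; both rely on exactly the exponential integrability from \ref{Au}.
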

\begin{proof}
The first inequality is obvious. As for the second, denoting $\mathbb{U}\vcentcolon=\alpha \int_0^T |U_t| dt + \overline{\alpha}|\overline{U}_T|$, we have for $Q \in \mathcal{Q}_f$, using the fact that, $ 0 \le S_t^\delta \le 1$, 
\begin{align}
\label{expectationOfc}
E_Q[|c(\cdot, Q)|] &\le E_Q\left[\alpha \int_0^T |U_t|dt +\overline{\alpha}|\overline{U}_T|\right] + \beta E_Q\left[\int_0^T r(t,\eta_t, \psi_t)dt\right]\\
&= E_Q[\mathbb{U}] + \beta \gamma_0(Q).
\end{align}
Fenchel inequality applied to $x \mapsto x\log(x)$, gives
\begin{equation}
\label{fenchel}
xy \le \frac{1}{\lambda}(x\log(x) + e^{\lambda y-1}),~~\forall (x, y, \lambda) \in \R_+^*\times \R_+^* \times \R^*.
\end{equation}
Therefore, using this inequality with $\lambda = 1$, we get,
\begin{align*}
E_Q[\mathbb{U}] &= E\left[\frac{dQ}{dP}\mathbb{U}\right]\\
&\le E\left[\frac{dQ}{dP}\log\left(\frac{dQ}{dP}\right)\right] + E[e^{\mathbb{U} - 1}]\\
&= H(Q|P) + E[e^{\mathbb{U} - 1}]\\
&\le \frac{\gamma_0(Q)}{K_1} +\frac{TK_2}{K_1} +e^{-1}E[e^{\mathbb{U}}],
\end{align*}
where we used \eqref{entropy} in the last inequality. Going back to \eqref{expectationOfc}, we obtain,
$$E_Q[|c(\cdot,  Q)|] \le \left(\beta + \frac{1}{K_1}\right)\gamma_0(Q) + \left(\frac{TK_2}{K_1} + e^{-1}E[e^{\mathbb{U}}]\right),$$
where the term $E[e^{\mathbb{U}}]$ is finite as pointed out in remark \ref{integrability}.
We then conclude by setting $C \vcentcolon= \max\left(\beta + \frac{1}{K_1}, \frac{TK_2}{K_1} + e^{-1}E[e^{\mathbb{U}}]\right)$.
\end{proof} 
The next result shows that $\Gamma$ is bounded from below by $\gamma_0(Q)$. This will be very useful for proving the existence of an optimal probability.
\begin{Proposition}
Assume \ref{Ar} and \ref{Au} hold. Then, there exists $C \in (0, \infty)$ depending on $\alpha, \overline{\alpha}, \beta, \delta, T, U, \overline{U}_T$ such that for all $Q \in \mathcal{Q}_f$
\begin{equation}
\label{lowerBoundGamma}
\gamma_0(Q) \le C(1 + \Gamma(Q)).
\end{equation}
In particular, we have $\underset{Q\in \mathcal{Q}_f}{\inf} \Gamma(Q)> -\infty$
\end{Proposition}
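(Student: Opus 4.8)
The plan is to reverse the chain of estimates used in the previous proposition, the key new ingredient being that the discount factor is bounded below by a strictly positive constant. Since $\delta$ is bounded by $\|\delta\|_\infty$ (Assumption~\ref{Au}~i.), we have $S_s^\delta = \exp(-\int_0^s \delta_u\,du) \ge e^{-\|\delta\|_\infty T} =: c_\delta > 0$ for every $s \in [0,T]$. Combined with the non-negativity of $r$, this gives
$$\beta\, E_Q[\mathcal{R}_{0,T}(Q)] = \beta\, E_Q\left[\int_0^T S_s^\delta\, r(s, \eta_s, \psi_s)\,ds\right] \ge \beta c_\delta\, \gamma_0(Q).$$
On the other hand, writing $\Gamma(Q) = E_Q[\mathcal{U}_{0,T}^\delta] + \beta\, E_Q[\mathcal{R}_{0,T}(Q)]$ and using $0 \le S_s^\delta \le 1$ to bound $E_Q[\mathcal{U}_{0,T}^\delta] \ge -E_Q[\mathbb{U}]$, where $\mathbb{U} := \alpha \int_0^T |U_t|\,dt + \overline{\alpha}|\overline{U}_T|$ as in the previous proof, I would obtain the two-sided squeeze
$$\beta c_\delta\, \gamma_0(Q) \le \beta\, E_Q[\mathcal{R}_{0,T}(Q)] = \Gamma(Q) - E_Q[\mathcal{U}_{0,T}^\delta] \le \Gamma(Q) + E_Q[\mathbb{U}].$$

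The hard part is that any control of $E_Q[\mathbb{U}]$ via the relative entropy reintroduces $\gamma_0(Q)$, so this quantity appears on both sides and cannot be absorbed unless its coefficient on the right is made strictly smaller than $\beta c_\delta$. This is exactly where the free parameter $\lambda$ in the Fenchel inequality \eqref{fenchel} is essential. Applying it with $x = dQ/dP$, $y = \mathbb{U}$ and a general $\lambda > 0$, taking $P$-expectation and then invoking \eqref{entropy}, I would get
$$E_Q[\mathbb{U}] \le \frac{1}{\lambda} H(Q|P) + \frac{e^{-1}}{\lambda}\, E[e^{\lambda \mathbb{U}}] \le \frac{1}{\lambda K_1}\gamma_0(Q) + \frac{TK_2}{\lambda K_1} + \frac{e^{-1}}{\lambda}E[e^{\lambda \mathbb{U}}].$$

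I would then fix $\lambda$ large enough that $\tfrac{1}{\lambda K_1} < \beta c_\delta$ (for instance $\lambda = 2/(\beta c_\delta K_1)$, which yields coefficient $\tfrac{\beta c_\delta}{2}$), so that the $\gamma_0(Q)$ terms can be moved to the left:
$$\left(\beta c_\delta - \frac{1}{\lambda K_1}\right)\gamma_0(Q) \le \Gamma(Q) + \frac{TK_2}{\lambda K_1} + \frac{e^{-1}}{\lambda}E[e^{\lambda \mathbb{U}}].$$
Since $E[e^{\lambda \mathbb{U}}] < \infty$ by Remark~\ref{integrability}, dividing by the positive constant $\beta c_\delta - 1/(\lambda K_1)$ yields a finite constant $C$, depending only on $\alpha, \overline{\alpha}, \beta, \delta, T, U, \overline{U}_T$, for which $\gamma_0(Q) \le C(1 + \Gamma(Q))$. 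Finally, the assertion $\inf_{Q \in \mathcal{Q}_f} \Gamma(Q) > -\infty$ is immediate: because $r \ge 0$ forces $\gamma_0(Q) \ge 0$, the bound gives $0 \le C(1 + \Gamma(Q))$, whence $\Gamma(Q) \ge -1$ for every $Q \in \mathcal{Q}_f$.
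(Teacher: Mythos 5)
Your proposal is correct and follows essentially the same route as the paper: bound $E_Q[\mathcal{R}_{0,T}]$ from below by $e^{-\|\delta\|_\infty T}\gamma_0(Q)$, control $E_Q[\mathbb{U}]$ via the Fenchel inequality with a free parameter $\lambda$ together with \eqref{entropy}, and choose $\lambda$ large enough that the coefficient of $\gamma_0(Q)$ stays positive after absorption. Your explicit choice of $\lambda$ and the closing remark that $\gamma_0(Q)\ge 0$ yields $\Gamma(Q)\ge -1$ are fine refinements of the same argument.
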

\begin{proof}
Using the same notation as in the proof of the previous proposition, we have,
$$ E_Q[\mathcal{U}_{0,T}^\delta] \ge -E_Q[\mathbb{U}].$$
For every $\lambda \in \R^*$, using the inequality \eqref{fenchel}, we get,
\begin{align*}
E_Q[\mathbb{U}] &\le \frac{1}{\lambda} H(Q|P) + \frac{1}{\lambda}E[e^{\lambda \mathbb{U}-1}]\\
&\le \frac{1}{\lambda}\left(\frac{\gamma_0(Q)}{K_1}+\frac{TK_2}{K_1}\right)+\frac{e^{-1}}{\lambda}E[e^{\lambda \mathbb{U}}],
\end{align*}
where we used \eqref{entropy} in the last inequality. On the other hand, since the discounting process is bounded from above, we have
\begin{align*}
E_Q[\mathcal{R}_{0,T}] &\underset{(\Delta)}{=} E_Q\left[\int_0^T S_t^\delta r(t, \eta_t, \psi_t) dt\right]
&\ge e^{-||\delta||_\infty T}E_Q\left[\int_0^T r(t, \eta_t, \psi_t)dt\right] = e^{-||\delta||_\infty T}\gamma_0(Q).
\end{align*}
Combining the two previous inequalities leads to the following,
$$\Gamma(Q) = E_Q[\mathcal{U}_{0,T}^\delta +\beta \mathcal{R}_{0,T}^\delta] \ge \left(\beta e^{-||\delta||_\infty T} - \frac{1}{\lambda K_1}\right)\gamma_0(Q)-\frac{TK_2}{\lambda K_1}-\frac{e^{-1}}{\lambda}E[e^{\lambda U}]$$
Choosing $\lambda$ large enough such that $\mu \vcentcolon=\beta e^{-||\delta||_\infty T} - \frac{1}{\lambda K_1} > 0$, we get the desired result by setting $C \vcentcolon= \frac{1}{\mu}\max\left(1,\frac{TK_2}{\lambda K_1}+\frac{e^{-1}}{\lambda}E[e^{\lambda U}]\right)$.
\end{proof}
The following is a direct consequence of the previous proposition and inequality \eqref{entropy}.
\begin{Corollary}
Under assumptions \ref{Ar} and \ref{Au}, there exists $K \in (0, \infty)$ such that for every $Q\in \mathcal{Q}_f$, we have the following
\begin{equation}
\label{lowerBoundGamma2}
H(Q|P) \le K(1 + \Gamma(Q)).
\end{equation}
\end{Corollary}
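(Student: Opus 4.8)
The plan is to simply chain the two estimates already at our disposal: the entropy control \eqref{entropy}, valid under \ref{Ar}, and the lower bound \eqref{lowerBoundGamma} of the preceding proposition, valid under \ref{Ar} and \ref{Au}. Both hold for every $Q \in \mathcal{Q}_f$, which is exactly the range over which the corollary is stated, so no new approximation or localization is needed.

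Concretely, I would first recall that \eqref{entropy} reads $H(Q|P) \le \frac{1}{K_1}\gamma_0(Q) + \frac{TK_2}{K_1}$, and that \eqref{lowerBoundGamma} furnishes a constant $C \in (0,\infty)$, depending only on $\alpha, \overline{\alpha}, \beta, \delta, T, U, \overline{U}_T$, with $\gamma_0(Q) \le C(1 + \Gamma(Q))$. Substituting the latter into the former gives, for every $Q \in \mathcal{Q}_f$,
\[
H(Q|P) \le \frac{C}{K_1}\bigl(1 + \Gamma(Q)\bigr) + \frac{TK_2}{K_1}.
\]
It then remains to absorb the additive constant $TK_2/K_1$ into the factor $(1+\Gamma(Q))$. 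For this I would use that the penalty is nonnegative, $\gamma_0(Q) = E_Q\bigl[\int_0^T r(s,\eta_s,\psi_s)\,ds\bigr] \ge 0$ because $r \ge 0$; combined with \eqref{lowerBoundGamma} this forces $1 + \Gamma(Q) \ge \gamma_0(Q)/C \ge 0$, so that the right-hand side above is well defined and nonnegative. Collecting the constants and enlarging the coefficient if necessary — for instance taking $K := (C + TK_2)/K_1$ — then yields the announced bound $H(Q|P) \le K(1 + \Gamma(Q))$.

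Since both ingredients are already established, there is essentially no obstacle here: the single point requiring a line of care is the constant bookkeeping, i.e.\ checking that the additive term $TK_2/K_1$ can be folded into $K(1+\Gamma(Q))$, which rests precisely on the sign information $1 + \Gamma(Q) \ge 0$ inherited from $\gamma_0(Q) \ge 0$. The qualitative content — that a uniform upper bound on the cost functional $\Gamma(Q)$ transfers to a uniform bound on the relative entropy $H(Q|P)$ — is exactly the input that will later supply the tightness needed to run the Koml\'os-type existence argument for the optimal measure $Q^*$.
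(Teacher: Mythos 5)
Your proof is exactly the paper's: the corollary is presented there with no separate argument, as \q{a direct consequence of the previous proposition and inequality \eqref{entropy}}, which is precisely the chaining of \eqref{entropy} with \eqref{lowerBoundGamma} that you carry out. One small caveat: the sign information $1+\Gamma(Q)\ge 0$ alone does not suffice to fold the additive term $TK_2/K_1$ into $K(1+\Gamma(Q))$ (with your choice $K=(C+TK_2)/K_1$ you would need $1+\Gamma(Q)\ge 1$, i.e.\ $\Gamma(Q)\ge 0$, which is not guaranteed since $\mathcal{U}^\delta_{0,T}$ may be negative) --- but this is the same harmless looseness in the $K(1+\cdot)$ bookkeeping already present in the paper's own \eqref{lowerBoundGamma}, and the substantive affine bound $H(Q|P)\le \tfrac{C}{K_1}\bigl(1+\Gamma(Q)\bigr)+\tfrac{TK_2}{K_1}$, which is all the subsequent Koml\'os argument actually uses, is fully established by your argument.
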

In the same spirit of the proof of the above proposition, we have the following estimate that is crucial in proving the existence of an optimal probability $Q^* \in \mathcal{Q}_f$.
\begin{Lemma}
\label{lemIndic}
For any $\lambda > 0$ and any measurable set $A \in \mathcal{F}_T$, we have for every $Q \in \mathcal{Q}_f$
\begin{equation}
\label{estimateIndic}
E_Q[|\mathcal{U}_{0,T}^\delta| \mathbbm{1}_A]\le \frac{\gamma_0(Q)}{\lambda K_1} + \frac{TK_2}{\lambda K_1}  +\frac{e^{-1}}{\lambda}E\left[\mathbbm{1}_A\exp(\lambda\alpha \int_0^T|U_s|ds +\lambda\overline{\alpha}|\overline{U}_T|)\right].
\end{equation}
\end{Lemma}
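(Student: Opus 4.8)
The plan is to reduce the statement to the Fenchel--Young inequality \eqref{fenchel} for the convex function $x\mapsto x\log x$, exactly in the spirit of the two preceding propositions; the only genuinely new feature is the localisation to the set $A$.

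First I would dominate $|\mathcal{U}_{0,T}^\delta|$ by the $\delta$-free random variable already used above. Since $S_0^\delta=1$ and $0\le S_s^\delta\le 1$ for every $s\in[0,T]$, all the quotients $S_s^\delta/S_0^\delta$ lie in $[0,1]$, which gives the pointwise bound
\[
|\mathcal{U}_{0,T}^\delta|\le \alpha\int_0^T|U_s|\,ds+\overline{\alpha}|\overline{U}_T|=\mathbb{U}.
\]
Multiplying by $\mathbbm{1}_A$, taking $E_Q$ and passing to the reference measure through the density then yields
\[
E_Q\!\left[|\mathcal{U}_{0,T}^\delta|\,\mathbbm{1}_A\right]\le E_Q[\mathbb{U}\,\mathbbm{1}_A]=E\!\left[\frac{dQ}{dP}\,\mathbb{U}\,\mathbbm{1}_A\right].
\]

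Next I would apply \eqref{fenchel} with the prescribed $\lambda>0$, the choice $x=\tfrac{dQ}{dP}$ and $y=\mathbb{U}$ (both positive on $A$), then multiply by $\mathbbm{1}_A$ and integrate against $P$. Using $\tfrac1\lambda e^{\lambda y-1}=\tfrac{e^{-1}}{\lambda}e^{\lambda\mathbb{U}}$, this produces exactly the exponential term of the statement, with the indicator still attached:
\[
E_Q[\mathbb{U}\,\mathbbm{1}_A]\le\frac1\lambda\,E\!\left[\mathbbm{1}_A\,\frac{dQ}{dP}\log\frac{dQ}{dP}\right]+\frac{e^{-1}}{\lambda}\,E\!\left[\mathbbm{1}_A\,e^{\lambda\mathbb{U}}\right].
\]
It would then remain to control the first term by $\tfrac1\lambda H(Q|P)$ and to invoke the entropy estimate \eqref{entropy}, i.e.\ $H(Q|P)\le\gamma_0(Q)/K_1+TK_2/K_1$, which recovers the two leading terms $\tfrac{\gamma_0(Q)}{\lambda K_1}+\tfrac{TK_2}{\lambda K_1}$. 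Finiteness of every quantity along the way is guaranteed by $Q\in\mathcal{Q}_f$ (so $\gamma_0(Q)<\infty$, hence $H(Q|P)<\infty$ by \eqref{entropy}) together with Remark \ref{integrability}, which makes $E[e^{\lambda\mathbb{U}}]$ finite.

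The step I expect to be the main obstacle is precisely this last localisation of the entropy, namely passing from the restricted integral $\int_A\tfrac{dQ}{dP}\log\tfrac{dQ}{dP}\,dP$ to the full relative entropy $H(Q|P)=\int_\Omega\tfrac{dQ}{dP}\log\tfrac{dQ}{dP}\,dP$. Because $x\mapsto x\log x$ is \emph{negative} on $(0,1)$, the restricted integral is not automatically dominated by $H(Q|P)$, and the sign of the missing contribution over $A^c$ must be handled with care; one cannot simply drop the indicator here. The tool I would use is the elementary bound $x\log x\ge -e^{-1}$, which gives $\int_{A^c}\tfrac{dQ}{dP}\log\tfrac{dQ}{dP}\,dP\ge -e^{-1}P(A^c)\ge -e^{-1}$, so that $E[\mathbbm{1}_A\tfrac{dQ}{dP}\log\tfrac{dQ}{dP}]\le H(Q|P)+e^{-1}P(A^c)$; the resulting bounded additive remainder is harmless and is the precise point that the argument must address (it plays no role in the subsequent Koml\'os/uniform-integrability application for which the lemma is designed). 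Once this localised entropy control is in place, chaining the three displays above with \eqref{entropy} delivers the asserted inequality.
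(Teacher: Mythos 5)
Your proposal follows the paper's proof essentially line for line: bound $|\mathcal{U}_{0,T}^\delta|$ by $\mathbb{U}$ using $0\le S^\delta_\cdot\le 1$, apply the Fenchel inequality \eqref{fenchel} with $x=\frac{dQ}{dP}$ and $y=\mathbb{U}$, multiply by $\mathbbm{1}_A$, take the $P$-expectation, and conclude with the entropy bound \eqref{entropy}. The one place where you deviate is exactly the step the paper glosses over: the printed proof passes directly from $\frac{1}{\lambda}E[\mathbbm{1}_A\frac{dQ}{dP}\log\frac{dQ}{dP}]$ to $\frac{1}{\lambda}H(Q|P)$, which, as you correctly observe, is not a legitimate domination because $x\log x<0$ on $(0,1)$ and the discarded contribution over $A^c$ may be negative. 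Your patch via $x\log x\ge -e^{-1}$ is sound, but be aware that it only establishes the lemma up to an additional term $\frac{e^{-1}}{\lambda}P(A^c)$ on the right-hand side, i.e.\ a statement strictly weaker than \eqref{estimateIndic} as printed; neither your argument nor the paper's actually delivers the displayed inequality with no remainder. (An alternative is to pair $\frac{dQ}{dP}$ with the nonnegative integrand $x\log x-x+1$, whose convex conjugate is $e^y-1$; the localised expectation of a nonnegative integrand is then dominated by its global expectation $H(Q|P)$ with no remainder, at the price of replacing $\frac{e^{-1}}{\lambda}$ by $\frac{1}{\lambda}$ in front of the exponential term.) In every version the discrepancy is an additive $O(1/\lambda)$ term which vanishes when $\lambda\to\infty$ in the uniform-integrability argument of Theorem \ref{existence}, so the intended application is unaffected; your proof is therefore acceptable, and in fact more careful than the one in the paper on the only delicate point.
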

\begin{proof}
Using inequality \eqref{fenchel}, we have for every $\lambda > 0$ and $Q\in \mathcal{Q}_f$,
\begin{equation*}
\frac{dQ}{dP}|\mathcal{U}_{0,T}^\delta|\mathbbm{1}_A \le  \frac{1}{\lambda}\left(\frac{dQ}{dP}\log\left(\frac{dQ}{dP}\right)+e^{-1} e^{\lambda \mathbb{U}} \right)\mathbbm{1}_A.
\end{equation*}
Taking the expectation under $P$ and using \eqref{entropy}, we consequently get,
$E_Q[|\mathcal{U}_{0,T}^\delta| \mathbbm{1}_A]\le \frac{\gamma_0(Q)}{\lambda K_1}+\frac{TK_2}{\lambda K_1}+\frac{e^{-1}}{\lambda}E[\mathbbm{1}_A \exp(\lambda\mathbb{U})].$
\end{proof}
\subsection{Existence of optimal probability}
In this subsection, we prove the existence of an optimal probability $Q^*\in \mathcal{Q}_f$ using a standard Koml\'os-type argument, but before let us show two important properties of the functionals $\Gamma$ and $\gamma_0$. We will introduce the following Linderberg condition on sequences of martingales converging almost surely to $0$. This technical assumption is needed to prove the lower-semicontinuity of $\gamma_0$:
\begin{enumerate}[label=($\mathcal{A}$\textsubscript{L})]
\item Every sequence $(M_n)$ of locally square integrable martingales with the representation $dM_t^n = \eta_t^n dW_t + \int_E \psi_t^n(x) \widetilde{\mu}(dt, dx)$ converging $P$-a.s. to $0$ for each $t \in [0, T]$, verifies the following Linderberg condition:
$$ \forall \epsilon \in (0, 1], \int_0^T \int_E |\psi_t^n(x)|^2 1_{|\psi_t^n(x)|\ge \epsilon|} \nu(dt, dx) \underset{n \to \infty}{\rightarrow} 0,~~\text{in}~P-\text{Probability}$$
\label{ALind}
\end{enumerate}
\begin{Proposition}
\label{convexLsc}
Under the assumption \ref{ALind}, we have the following:
\begin{enumerate}
\item $\mathcal{Q}_f$ is a convex set and the functional $Q \in \mathcal{Q}_f \mapsto \Gamma(Q)$ is convex.
\item $\gamma_0$ is lower-semicontinuous for $L^1(P)$ convergence.
\end{enumerate}
\end{Proposition}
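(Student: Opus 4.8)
The plan is to prove the two assertions separately, observing that the convexity statement (item~1) will not in fact use \ref{ALind}; that assumption is only needed for the lower-semicontinuity (item~2). For item~1 the idea is to read off how the Girsanov parameters $(\eta,\psi)$ depend on the density and to exploit that $r(t,\cdot,\cdot)$ is convex. Fix $Q^0,Q^1\in\mathcal{Q}_f$ with density martingales $D^0,D^1$ obeying $dD^i_t=D^i_{t^-}\big(\eta^i_t\,dW_t+\int_E\psi^i_t(x)\widetilde{\mu}(dt,dx)\big)$, and set $Q^\theta=(1-\theta)Q^0+\theta Q^1$, whose density is $D^\theta=(1-\theta)D^0+\theta D^1$. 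Differentiating this linear relation and matching it, through uniqueness of the representation \eqref{representation}, with $dD^\theta_t=D^\theta_{t^-}\big(\eta^\theta_t\,dW_t+\int_E\psi^\theta_t\,\widetilde{\mu}\big)$, one finds on $\{D^\theta_{t^-}>0\}$
\[
\eta^\theta_t=a_t\eta^0_t+b_t\eta^1_t,\qquad \psi^\theta_t=a_t\psi^0_t+b_t\psi^1_t,\qquad a_t:=\tfrac{(1-\theta)D^0_{t^-}}{D^\theta_{t^-}},\quad b_t:=\tfrac{\theta D^1_{t^-}}{D^\theta_{t^-}},
\]
with $a_t+b_t=1$ and $a_t,b_t\ge0$. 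Hence $(\eta^\theta_t,\psi^\theta_t)$ is a genuine convex combination, and convexity of $r$ gives $r(t,\eta^\theta_t,\psi^\theta_t)\le a_t\,r(t,\eta^0_t,\psi^0_t)+b_t\,r(t,\eta^1_t,\psi^1_t)$.

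Next I would integrate this bound against $D^\theta$. Writing $\gamma_0(Q^\theta)=\int_0^T E[D^\theta_t\,r(t,\eta^\theta_t,\psi^\theta_t)]\,dt$ is legitimate because $r\ge0$ (Tonelli), because $r(t,\eta^\theta_t,\psi^\theta_t)$ is $\mathcal{F}_t$-measurable so the martingale property turns $D^\theta_T$ into $D^\theta_t$, and because $\nu(\{t\}\times E)=0$ lets me replace $D^\theta_t$ by $D^\theta_{t^-}$ at each fixed $t$. The identities $D^\theta_{t^-}a_t=(1-\theta)D^0_{t^-}$ and $D^\theta_{t^-}b_t=\theta D^1_{t^-}$ then collapse the weighted estimate into $\gamma_0(Q^\theta)\le(1-\theta)\gamma_0(Q^0)+\theta\gamma_0(Q^1)$, which proves at once that $\mathcal{Q}_f$ is convex and that $\gamma_0$ is convex. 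The same computation, carrying the extra nonnegative factor $S^\delta_t$ (which does not depend on $Q$), shows $Q\mapsto\beta E_Q[\mathcal{R}^\delta_{0,T}(Q)]$ is convex; since $E_{Q^\theta}[\mathcal{U}^\delta_{0,T}]=(1-\theta)E_{Q^0}[\mathcal{U}^\delta_{0,T}]+\theta E_{Q^1}[\mathcal{U}^\delta_{0,T}]$ is affine and finite by \eqref{upperBoundGamma}, the functional $\Gamma$ is convex.

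For item~2, take $Q_n\to Q$ in $L^1(P)$, i.e. $D^n_T\to D_T$ in $L^1$; I may assume $L:=\liminf_n\gamma_0(Q_n)<\infty$ and, along a subsequence, that $\gamma_0(Q_n)\to L$ and $D^n_T\to D_T$ $P$-a.s. Rewriting as above $\gamma_0(Q_n)=\int_0^T E[D^n_t\,r(t,\eta^n_t,\psi^n_t)]\,dt$ and $\gamma_0(Q)=\int_0^T E[D_t\,r(t,\eta_t,\psi_t)]\,dt$ reduces the claim to a Fatou inequality for these nonnegative integrands on $[0,T]\times\Omega$. Assumption \ref{Ar} combined with \eqref{entropy} bounds $\sup_n E\big[\int_0^T D^n_t\big(\tfrac{|\eta^n_t|^2}{2}+\int_E f(\psi^n_t(x))\xi_t(x)\lambda(dx)\big)dt\big]$ by a multiple of $1+\sup_n\gamma_0(Q_n)<\infty$, giving the weak compactness needed to extract limiting coefficients. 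Since $D^n_t=E[D^n_T\mid\mathcal{F}_t]\to D_t$ in $L^1$ for every $t$, the density martingales, hence their stochastic logarithms $M^n=\eta^n\cdot W+\psi^n\star\widetilde{\mu}$, converge to the stochastic logarithm $M=\eta\cdot W+\psi\star\widetilde{\mu}$ of $D$; here \ref{ALind}, applied to the difference martingales, guarantees that the purely discontinuous parts pass to the limit without losing jump mass or generating a spurious continuous component, so the weak limit of $(\eta^n,\psi^n)$ is \emph{exactly} the pair $(\eta,\psi)$ attached to $Q$. I would then convexify by Mazur's lemma — passing to convex combinations, which by the convexity proved in item~1 does not increase the penalties — extract an a.e.-convergent subsequence, and use lower-semicontinuity of $r$ in $(\eta,\psi)$ together with $D^n_t\to D_t$ to obtain $D_t\,r(t,\eta_t,\psi_t)\le\liminf_n D^n_t\,r(t,\eta^n_t,\psi^n_t)$ a.e.; Fatou on $[0,T]\times\Omega$ then yields $\gamma_0(Q)\le L$.

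The main obstacle is precisely this identification step, compounded by the fact that the integrals are weighted by the $n$-dependent densities $D^n_t$: one is comparing integral functionals against the measures $D^n_t\,dt\,dP$, which merely converge, so the passage to the limit in the convex, jump-carrying integrand is delicate and is exactly where \ref{ALind} is indispensable. The convex-combination (Mazur) device is the mechanism that reconciles the weak compactness coming from the entropy bound with the pointwise lower-semicontinuity of $r$, and checking that it correctly recovers $(\eta,\psi)$ for the jump part is the technically demanding point of the whole proof.
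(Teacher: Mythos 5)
Your proof of item~1 is essentially the paper's: the same density-weighted convex combination of the Girsanov coefficients, the same use of the martingale property and Fubini to replace $D^\theta_T$ by $D^\theta_t$, and the same cancellation of the weights against $D^\theta_{t^-}$ (the paper justifies the passage from $D^\theta_t$ to $D^\theta_{t^-}$ by right-continuity rather than by $\nu(\{t\}\times E)=0$, but this is cosmetic). That part is fine.

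For item~2 you take a genuinely different route --- weak compactness from the entropy bound plus Mazur's lemma --- and it has a real gap at exactly the point you yourself flag as "the technically demanding point": the identification of the limit coefficients with the pair $(\eta,\psi)$ attached to $Q$. You assert that \ref{ALind} "guarantees that the purely discontinuous parts pass to the limit," but you give no mechanism, and the assumption as stated is about predictable quadratic variations of martingales converging a.s.\ to zero; it does not by itself convert weak $L^2$-type convergence of $(\eta^n,\psi^n)$ into an identification of the limit. The paper's argument supplies precisely this missing link by a chain of \emph{strong} convergences: Doob's maximal inequality gives $\sup_t|D^n_t-D_t|\to 0$ in probability, a stopping-time truncation plus Burkholder--Davis--Gundy gives $[D^n-D]_T\to 0$ in probability, and then \ref{ALind} together with Corollary~1 of \citet{shiryayev1981martingales} transfers this to $\langle D^n-D\rangle_T\to 0$; since $\langle D^n-D\rangle_T=\int_0^T|D^n_{t^-}\eta^n_t-D_{t^-}\eta_t|^2dt+\int_0^T\int_E|D^n_{t^-}\psi^n_t(x)-D_{t^-}\psi_t(x)|^2\nu(dt,dx)$, one reads off a.e.\ convergence of $D^n_{t^-}\eta^n_t$ and $D^n_{t^-}\psi^n_t$ directly, which makes Mazur's lemma unnecessary. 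Your compactness step is also not secured: assumption \ref{Ar} bounds $E[\int_0^T D^n_t(\tfrac{1}{2}|\eta^n_t|^2+\int_E f(\psi^n_t)\xi_t\,\lambda(dx))dt]$, i.e.\ it controls $\sqrt{D^n}\,\eta^n$ in $L^2$ and $\psi^n$ only through the entropy-type function $f$, not $D^n\eta^n$ or $D^n\psi^n$ in a reflexive space, so the weak compactness you invoke would need a separate uniform-integrability argument that you do not supply.

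A second omission: you apply lower-semicontinuity of $r$ to conclude $D_t\,r(t,\eta_t,\psi_t)\le\liminf_n D^n_t\,r(t,\eta^n_t,\psi^n_t)$, but what converges is $(D^n_{t^-},D^n_{t^-}\eta^n_t,D^n_{t^-}\psi^n_t)$, so the correct object is the perspective function $g(t,x,y,z)=x\,r(t,y/x,z/x)$, and one must deal with the set where $D_{t^-}=0$. The paper handles this with the stopping times $\zeta=\inf\{t:D_t=0\}$, $\zeta_n=\inf\{t:D^n_t=0\}$ and a contradiction argument comparing $\gamma_0(Q)$ with $\liminf_n\gamma_0(Q^n)$ up to an $\epsilon$. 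Your sketch is silent on this, and without it the Fatou step is not justified on all of $[0,T]\times\Omega$.
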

\begin{proof}
\begin{enumerate}
\item Let $\lambda \in (0, 1)$, $Q$, $\widetilde{Q} \in \mathcal{Q}_f$ and  $Q^\lambda \vcentcolon= \lambda Q + (1-\lambda)\widetilde{Q}$. Let $D$ and $\widetilde{D}$ denote the corresponding density processes and $(\eta, \psi)$, $(\widetilde{\eta}, \widetilde{\psi})$ the associated processes via \eqref{density0}. Consider the following processes:
\begin{align*}
\eta_t^\lambda &\vcentcolon=\frac{\lambda D_{t^-}\eta_t + (1-\lambda)\widetilde{D}_{t^-}\widetilde{\eta}_t}{D^\lambda_{t^-}}\mathbbm{1}_{D^\lambda_{t^-}> 0},\\
\psi_t^\lambda &\vcentcolon=\frac{\lambda D_{t^-}\psi_t + (1-\lambda)\widetilde{D}_{t^-}\widetilde{\psi}_t}{D^\lambda_{t^-}}\mathbbm{1}_{D^\lambda_{t^-}> 0},
\end{align*}
where $D^\lambda := \lambda D + (1-\lambda)\widetilde{D}$ is the density process of $Q^\lambda$ with respect to $P$. It is easy to see that the density $D^\lambda$ satisfies the following SDE:
$$d D_t^\lambda = D_{t^-}^\lambda \left(\eta_t^\lambda dW_t + \int_E \psi^\lambda_t(x)\widetilde{\mu}(dt, dx)\right), t\in [0, T], Q^\lambda -a.s.$$
Hence, using the convexity assumption of $r$, we have,
\begin{align*}
\gamma_0(Q^\lambda) &= E_{Q^\lambda}\left[\int_0^T r(t, \eta_t^\lambda, \psi_t^\lambda)dt\right]\\
&\le E_{Q^\lambda}\left[\int_0^T \left(\frac{\lambda D_{t^-}}{D^\lambda_{t^-}}r(t, \eta_t, \psi_t) + \frac{(1-\lambda) \widetilde{D}_{t^-}}{D^\lambda_{t^-}}r(t, \widetilde{\eta}_t, \widetilde{\psi}_t)\right)\mathbbm{1}_{D^\lambda_{t^-}> 0}dt\right]\\
&= E\left[D^\lambda_T\int_0^T \left(\frac{\lambda D_{t^-}}{D^\lambda_{t^-}}r(t, \eta_t, \psi_t) + \frac{(1-\lambda) \widetilde{D}_{t^-}}{D^\lambda_{t^-}}r(t, \widetilde{\eta}_t, \widetilde{\psi}_t)\right)\mathbbm{1}_{D^\lambda_{t^-}> 0}dt\right].
\end{align*}
Using Fubini's Theorem to interchange integral and expectation followed by conditioning on $\mathcal{F}_t$ and the martingale property of the density process $D^\lambda$, yields,
$$\gamma_0(Q^\lambda) \le E\left[\int_0^T D^\lambda_t \left(\frac{\lambda D_{t^-}}{D^\lambda_{t^-}}r(t, \eta_t, \psi_t) + \frac{(1-\lambda) \widetilde{D}_{t^-}}{D^\lambda_{t^-}}r(t, \widetilde{\eta}_t, \widetilde{\psi}_t)\right)\mathbbm{1}_{D^\lambda_{t^-}> 0}dt\right].$$
Since $D^\lambda$ is right continuous, the set $\{t \in [0, T], D^\lambda_t \neq D^\lambda_{t^-}\}$ is countable. Therefore, we have, 
\begin{align*}
\gamma_0(Q^\lambda)&= E\left[\int_0^T D^\lambda_{t^-} \left(\frac{\lambda D_{t^-}}{D^\lambda_{t^-}}r(t, \eta_t, \psi_t) + \frac{(1-\lambda) \widetilde{D}_{t^-}}{D^\lambda_{t^-}}r(t, \widetilde{\eta}_t, \widetilde{\psi}_t)\right)\mathbbm{1}_{D^\lambda_{t^-}> 0}dt\right]\\
&\le E\left[\int_0^T \lambda D_{t^-}r(t, \eta_t, \psi_t) + (1-\lambda) \widetilde{D}_{t^-}r(t, \widetilde{\eta}_t, \widetilde{\psi}_t)dt\right]\\
&= \lambda \gamma_0(Q) + (1-\lambda)\gamma_0(\widetilde{Q}) < \infty.
\end{align*}
We have showed then that $\mathcal{Q}_f$ is convex. The convexity of the functional $\Gamma$ follows readily by using the same arguments used above.
\item Let $(Q^n)$ be a sequence of probability measures that converges to $Q$ in $L^1(\Omega, P)$, i.e.,  $D_T^n \to D_T$ in $L^1(\Omega, P)$ where $D^n$ and $D$ the corresponding densities processes. Let $(\eta^n, \psi^n)$ and $(\eta, \psi)$ (resp.) be the processes given by \eqref{density0} of $D^n$ and $D$ (resp.). 
Since we know that $D^n_T$ converges to $D_T$ in $L^1(P)$, the maximal Doob's inequality
$$P(\underset{0 \le t \le T}{\sup}|D^n_t - D_t|\ge \epsilon) \le \frac{1}{\epsilon}E[|D^n_T-D_T|],~~ \forall \epsilon > 0,$$
implies that $(\underset{0 \le t \le T}{\sup}|D^n_t - D_t|)$ converges to $0$ in $P$-probability. By passing by a subsequence, we can assume that $(\underset{0 \le t \le T}{\sup}|D^n_t - D_t|)$ converges to $0$ $P-a.s.$\\
We denote $M_t^n \vcentcolon=\underset{0 \le s \le t}{\sup}|D^n_s-D_s|$ and introduce the following stopping time $\tau_n \vcentcolon=\inf\{t\in [0,T], M_t^n \ge 1\}\wedge T$. We have $M^n_{\tau_n} \le M_{\tau_n^-}^n + |D^n_{\tau_n}-D_{\tau_n}|$ and by taking expectation in the latter we get
\begin{equation}
\label{equation1}
E[M_{\tau_n}^n] \le E[M_{\tau_n^-}^n] + E[|D^n_{\tau_n}-D_{\tau_n}|].
\end{equation}
Recall that  $M_T^n \underset{n \to \infty}{\rightarrow} 0$ and since $(M_t^n)_t$ is a nondecreasing process we have $M_{\tau_n^-} \le M_T^n$ so that $M_{\tau_n^-}^n \underset{n \to \infty}{\longrightarrow} 0$. We also have by the definition of the stopping time $\tau_n$ that $M_{\tau_n^-}^n \le 1$. Hence, by the dominated convergence theorem, we obtain that 
\begin{equation}
\label{equation2}
E[M_{\tau_n^-}^n] \to 0 ~\text{as}~n \to \infty.
\end{equation}
Furthermore, 
\begin{equation}
\label{equation3}
\begin{aligned}
E[|D^n_{\tau_n}-D_{\tau_n}|] &= E[|E[D^n_{T}|\mathcal{F}_{\tau_n}]-E[D_{T}|\mathcal{F}_{\tau_n}]|]=E[|E[D^n_{T} - D_{T}|\mathcal{F}_{\tau_n}]|]\\
&\le E[E[|D^n_{T}-D_{T}||\mathcal{F}_{\tau_n}]] = E[|D^n_{T}-D_{T}|]\underset{n \to \infty}{\longrightarrow} 0.
\end{aligned}
\end{equation}
Combining \eqref{equation1}, \eqref{equation2} and \eqref{equation3}, we deduce that $M_{\tau_n}^n$ converges to $0$ in $L^1(P)$. Then, by Burkholder-Davis-Gundy's inequality, we get that $[D^n-D]_{\tau_n}^{\frac{1}{2}}$ converges to $0$ in $L^1(P)$ and a fortiori in $P$-probability. Now, as $[D^n-D]_T = [D^n-D]_{\tau_n}\mathbbm{1}_{\tau_n = T} + [D^n-D]_T \mathbbm{1}_{\tau_n < T}$, then for every $\epsilon > 0$,
\begin{align*}
P([D^n-D]_T \ge \epsilon) &\le P([D^n-D]_{\tau_n}\mathbbm{1}_{\tau_n = T} \ge \epsilon) + P([D^n-D]_T \mathbbm{1}_{\tau_n < T} \ge \epsilon)\\
&\le P([D^n-D]_{\tau_n} \ge \epsilon) + P(\tau_n < T),
\end{align*}
and 
$$P(\tau_n < T) = P(\exists t \in [0, T]~\text{s.t.}~ M^n_t \ge 1)\le P(M_T^n \ge 1)\underset{n \to \infty}{\longrightarrow} 0.$$
So, we get that $[D^n-D]_T$ converges to $0$ in $P$-probability. On the other hand, since $D_t^n - D_t \to 0$, thanks to the assumption \ref{ALind}, we get from Corollary 1 in \citet{shiryayev1981martingales} that $\langle D^n-D\rangle_T$ converges to $0$ in $P$-probability and by passing to a subsequence while keeping the same notation, we may say that $\langle D^n-D\rangle_T$ converges to $0$ $P-a.s.$. But, we know that,
$$\langle D^n-D \rangle_T = \int_0^T |D^n_{t^-}\eta_t^n - D_{t^-}\eta_t|^2 dt + \int_0^T \int_E |D^n_{t^-}\psi_t^n(x)-D_{t^-}\psi_t(x)|^2 \nu(dt, dx).$$
Therefore, we immediately obtain that $D^n_{t^-}\eta_t^n \to D_{t^-}\eta_t$ $dP\times dt-a.e.$ and $dP\times dt-a.e.$, $D^n_{t^-}\psi_t^n(x)\to D_{t^-}\psi_t(x)$ in $L^2(E, \lambda)$. Next, we will show that $\gamma_0(Q) \le \underset{n \to \infty}{\liminf}~\gamma_0(Q^n)$. Assume by way of contradiction that $\gamma_0(Q) > l\vcentcolon=\underset{n \to \infty}{\liminf}~\gamma_0(Q^n)$. By passing to a subsequence, we may assume that $\gamma_0(Q^n) \to l$. Let $\zeta \vcentcolon= \inf\{t\in[0,T], D_t = 0\}$ and $\zeta_n \vcentcolon= \inf\{t\in[0,T], D^n_t = 0\}$. Since $D^n_t = 0$ on $\{t > \zeta_n\}$, we must have $\zeta \le \underset{n \to \infty}{\liminf}~\zeta_n$. Hence, for $\epsilon \vcentcolon = \frac{\gamma_0(Q)-l}{2}$, there is $k \in \N$ such that for $T_k \vcentcolon= \zeta \wedge\{\zeta_k, \zeta_{k+1},...\}$, we have
\begin{align*}
\gamma_0(Q) &= E_{Q}\left[\int_0^T r(t, \eta_t, \psi_t)dt\right] = E\left[\int_0^\zeta D_{t^-}r(t, \eta_t, \psi_t) dt\right]\\
&\le E\left[\int_0^{T_k} D_{t^-}r(t, \eta_t, \psi_t)dt\right] + \epsilon\\
&= E\left[\int_0^{T_k} g(t, D_{t^-}, D_{t^-}\eta_t, D_{t^-}\psi_t)dt\right] + \epsilon,
\end{align*}
where $g(t,x,y,z) \vcentcolon = xr(t, \frac{y}{x}, \frac{z}{x})$. Clearly, since $r$ is lower-semicontinuous in $(\eta, \psi) $, we get that also $g$ is also lower-semicontinuous. Hence, by Fatou's lemma, we obtain
\begin{align*}
E\left[\int_0^{T_k} g(t, D_{t^-}, D_{t^-}\eta_t, D_{t^-}\psi_t)dt\right] &\le  \underset{n \ge k}{\liminf}~E\left[\int_0^{T_k} g(t, D^n_{t^-}, D^n_{t^-}\eta_t, D^n_{t^-}\psi_t)dt\right]\\
&\le \underset{n \ge k}{\liminf}~E\left[\int_0^{\zeta_k} g(t,D^n_{t^-},  D^n_{t^-}\eta_t^n, D^n_{t^-}\psi_t^n)dt\right]\\
&=\underset{n \ge k}{\liminf}~\gamma_0(Q^n)=l,
\end{align*}
so that we have $\gamma_0(Q) \le l + \epsilon < \gamma_0(Q)$ which is a contradiction.
\end{enumerate}
\end{proof}
In the next theorem, we show the existence of an optimal probability $Q^* \in \mathcal{Q}_f$.
\begin{Theorem}
\label{existence}
Assume \ref{Ar}, \ref{Au} and \ref{ALind} hold. Then there exists a probability measure $Q^*$ minimizing $Q \mapsto \Gamma(Q)$ over $\mathcal{Q}_f$.
\end{Theorem}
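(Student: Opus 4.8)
The plan is to run a Komlós-type compactness argument on a minimizing sequence, using the a priori bounds established in Propositions~\eqref{upperBoundGamma}--\eqref{lowerBoundGamma}, the entropy bound \eqref{lowerBoundGamma2} and the indicator estimate \eqref{estimateIndic} to produce the limiting measure, and the convexity/lower-semicontinuity properties of Proposition~\ref{convexLsc} to identify it as a minimizer.

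First I would fix a minimizing sequence $(Q^n)_n \subset \mathcal{Q}_f$, i.e. $\Gamma(Q^n) \to I := \inf_{Q\in\mathcal{Q}_f}\Gamma(Q)$. This infimum is a finite real number: it is bounded below by the second proposition (equation \eqref{lowerBoundGamma}), and it is bounded above since $P$ itself lies in $\mathcal{Q}_f$ with $\gamma_0(P)=0$, so $I\le \Gamma(P)\le C$ by \eqref{upperBoundGamma}. Writing $D_T^n = dQ^n/dP$, the boundedness of $\Gamma(Q^n)$ combined with \eqref{lowerBoundGamma} and \eqref{lowerBoundGamma2} yields $\sup_n \gamma_0(Q^n)<\infty$ and $\sup_n H(Q^n|P)<\infty$. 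The densities trivially satisfy $\|D_T^n\|_{L^1(P)}=1$, so Komlós's theorem applies and produces a subsequence (not relabelled) whose Cesàro means $\bar D_T^N := \frac1N\sum_{n=1}^N D_T^n$ converge $P$-a.s. to some $D_T^* \in L^1(P)$. The uniform entropy bound is what prevents loss of mass in the limit: via the de la Vallée-Poussin criterion applied to the superlinear function $x\mapsto x\log x$, it gives uniform integrability of $(D_T^n)_n$, hence of the averages $(\bar D_T^N)_N$, so the a.s. convergence upgrades to $L^1(P)$ convergence and $E_P[D_T^*]=\lim_N E_P[\bar D_T^N]=1$. Since $D_T^*\ge 0$, it is the density of a probability measure $Q^*\ll P$, and the measures $\bar Q^N := \frac1N\sum_{n=1}^N Q^n$ (elements of the convex set $\mathcal{Q}_f$, Proposition~\ref{convexLsc}) converge to $Q^*$ in $L^1(P)$.

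Next I would check $Q^*\in\mathcal{Q}_f$ and that it minimizes $\Gamma$. Convexity of $\gamma_0$ (part of Proposition~\ref{convexLsc}) gives $\gamma_0(\bar Q^N)\le \frac1N\sum_{n=1}^N\gamma_0(Q^n)$, which stays bounded; the lower-semicontinuity of $\gamma_0$ then yields $\gamma_0(Q^*)\le\liminf_N\gamma_0(\bar Q^N)<\infty$, so $Q^*\in\mathcal{Q}_f$. To bound the value I split $\Gamma(\bar Q^N)=E_{\bar Q^N}[\mathcal{U}_{0,T}^\delta]+\beta E_{\bar Q^N}[\mathcal{R}_{0,T}^\delta(\bar Q^N)]$. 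The discounted penalty term is convex and lower-semicontinuous for $L^1(P)$ convergence by exactly the argument of Proposition~\ref{convexLsc} (the bounded discount factor $0\le S_t^\delta\le 1$ changes nothing), so $E_{Q^*}[\mathcal{R}_{0,T}^\delta(Q^*)]\le\liminf_N E_{\bar Q^N}[\mathcal{R}_{0,T}^\delta(\bar Q^N)]$. For the utility term, which is linear in the density, Lemma~\ref{lemIndic} supplies exactly the uniform integrability of $(D_T^n|\mathcal{U}_{0,T}^\delta|)_n$: fixing $\lambda$ large to absorb the bounded $\gamma_0(Q^n)$ contribution and invoking $E[\exp(\lambda\alpha\int_0^T|U_s|\,ds+\lambda\overline{\alpha}|\overline{U}_T|)]<\infty$ from Remark~\ref{integrability}, the right-hand side of \eqref{estimateIndic} tends to $0$ uniformly in $n$ as $P(A)\to 0$. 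Together with the a.s. convergence $\bar D_T^N\mathcal{U}_{0,T}^\delta\to D_T^*\mathcal{U}_{0,T}^\delta$, this forces the genuine limit $E_{\bar Q^N}[\mathcal{U}_{0,T}^\delta]\to E_{Q^*}[\mathcal{U}_{0,T}^\delta]$. Combining a convergent term with a lower-semicontinuous one and using the convexity bound $\Gamma(\bar Q^N)\le\frac1N\sum_{n=1}^N\Gamma(Q^n)\to I$, I conclude $\Gamma(Q^*)\le\liminf_N\Gamma(\bar Q^N)\le I$, so $Q^*$ is optimal.

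The main obstacle I expect is the simultaneous passage to the limit in the two $Q$-dependent pieces. The penalty is only lower-semicontinuous, whereas the utility term must be shown to actually converge, and this convergence is not free because $\mathcal{U}_{0,T}^\delta$ is unbounded; mere $L^1(P)$-convergence of the densities does not suffice, which is precisely why the tailored uniform-integrability estimate of Lemma~\ref{lemIndic} is indispensable. The final gluing then rests on the elementary but essential fact that for a convergent sequence $a_N$ and an arbitrary $b_N$ one has $\lim a_N+\liminf b_N=\liminf(a_N+b_N)$, which lets the convergent utility part be combined with the merely lower-semicontinuous penalty part without losing the inequality $\Gamma(Q^*)\le I$.
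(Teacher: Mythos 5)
Your proposal is correct and follows essentially the same route as the paper: a Komlós-type argument on the densities of a minimizing sequence, uniform integrability via the entropy bound and de la Vallée-Poussin, lower-semicontinuity of the penalty from Proposition~\ref{convexLsc}, and Lemma~\ref{lemIndic} to handle the unbounded utility term. The only cosmetic difference is that you package the last step as uniform integrability of $(D^n_T|\mathcal{U}^\delta_{0,T}|)_n$ plus Vitali convergence of the utility term, whereas the paper truncates $\mathcal{U}^\delta_{0,T}$ at level $-m$ and combines Fatou's lemma with the uniform tail estimate; both rest on the same lemma and give the required inequality.
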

\begin{proof}
Let $Q^n$ a minimizing sequence in $\mathcal{Q}_f$ such that
$$\Gamma(Q^n)\underset{n \to \infty}{\searrow\searrow} \underset{Q \in \mathcal{Q}_f}{\inf}\Gamma(Q),$$
and we denote by $D^n$ the corresponding density process. Since we have $D_T^n \ge 0$, it follows from Koml\'os' lemma that there exists a sequence denoted $\overline{D}^n_T$ such that $\overline{D}^n_T \in conv(D^n_T,D_T^{n+1},...)$ for each $n \in \N$ and $(\overline{D}^n_T)$ converges $P-a.s.$ to a random variable $\overline{D}^\infty_T$. Now, we will show that $\overline{D}^\infty_T$ is associated with a probability measure $\overline{Q}^\infty$. First, we have $\overline{D}^\infty_T$ is nonnegative as the $P-a.s.$ limit of the nonnegative sequence $(\overline{D}^n_T)_n$. Second, since $\mathcal{Q}_f$ is convex, each $\overline{D}^n_T$ is associated with a probability measure $\overline{Q}^n \in \mathcal{Q}_f$. Now, thanks to the convexity of $\Gamma$ and the fact that $(\Gamma(Q^n))_n$ is decreasing, we have the following,
\begin{equation}
\label{equation4}
\Gamma(\overline{Q}^n)\le \underset{m\ge n}{\sup} \Gamma(Q^n) = \Gamma(Q^n) \le \Gamma(Q^1).
\end{equation}
Consequently, using \eqref{lowerBoundGamma2}, we get,
\begin{equation*}
\underset{n \in \N}{\sup}~E[\overline{D}^n_T \log(\overline{D}^n_T)] = \sup H(\overline{Q}^n|P)\le K(1 + \underset{n \in \N}{\sup}~\Gamma(\overline{Q}^n))\le K(1 +\Gamma(Q^1)) < \infty.
\end{equation*}
By Vall\'ee-Poussin's criterion, the sequence $(\overline{D}_T^n)$ is $P$- uniformly integrable and therefore converges to $\overline{D}^\infty_T$ in $L^1(P)$. Hence, we have, $E[\overline{D}^\infty_T] = \underset{n \to \infty}{\lim} E[\overline{D}^n_T] = 1$ since $E[\overline{D}^n_T] = 1$ for all $n\in \N$. This shows that $\overline{D}^\infty_T$ can be associated with a probability $\overline{Q}^\infty$ on $\mathcal{F}_T$ such that $d\overline{Q}^\infty = \overline{D}^\infty_T dP$. Our next step is to prove that this probability $\overline{Q}^\infty$ belongs to $\mathcal{Q}_f$. By Proposition \ref{convexLsc}, we know that $\gamma_0$ is lower-semicontinuous with respect to $L^1(\Omega, P)$ convergence. Therefore, we get since $\overline{D}_T^n \overset{L^1}{ \to } \overline{D}_T^\infty$,
$$\gamma_0(\overline{Q}^\infty) \le \underset{n \to \infty}{\liminf}~\gamma_0(\overline{Q}^n).$$
But, thanks to \eqref{lowerBoundGamma}, we know that $\gamma_0(Q) \le C(1 + \Gamma(Q))$. Consequently, we obtain that,
$$\underset{n \to \infty}{\liminf}~\gamma_0(\overline{Q}^n) \le C(1 + \underset{n \in \N}{\sup}~\Gamma(\overline{Q}^n)).$$
The RHS of the last inequality is finite thanks to \eqref{equation4}. We then conclude that $\gamma_0(\overline{Q}^\infty) < \infty$, i.e., $\overline{Q}^\infty \in \mathcal{Q}_f$. It remains to show that $\overline{Q}^\infty$ is optimal. Note that using the same arguments in the proof of Proposition \ref{convexLsc}, the function $Q \mapsto E_Q[\mathcal{R}^\delta_{0,T}(Q)]=E_Q[\int_0^T S^\delta_t r(t, \eta_t, \psi_t)dt]$ is lower-semicontinuous for $L^1(\Omega, P)$ convergence and therefore we get immediately that,
$$E_{\overline{Q}^\infty}\left[\mathcal{R}_{0,T}^\delta(\overline{Q}^\infty)\right] \le \underset{n \to \infty}{\liminf}~E_{\overline{Q}^n}\left[\mathcal{R}_{0,T}^\delta(\overline{Q}^n)\right].$$
We denote $\overline{Y}^n\vcentcolon=\overline{D}^n_T \mathcal{U}_{0,T}^\delta$ and $\overline{Y}^\infty\vcentcolon=\overline{D}^\infty_T \mathcal{U}_{0,T}^\delta$. If we prove that we also have 
\begin{equation}
\label{equation5}
E[\overline{Y}^\infty] \le \underset{n \to \infty}{\liminf}~E[\overline{Y}^n],
\end{equation}
then we would have
\begin{align*}
\Gamma(\overline{Q}^\infty)&= E[\overline{Y}^\infty] + E_{\overline{Q}^\infty}[\mathcal{R}_{0, T}^\delta(\overline{Q}^\infty)]\\
&\le \underset{n \to \infty}{\liminf}~E[\overline{Y}^n] + \underset{n \to \infty}{\liminf}~E_{\overline{Q}^n}\left[\mathcal{R}_{0,T}^\delta(\overline{Q}^n)\right]\\
&\le \underset{n \to \infty}{\liminf}~E[\overline{Y}^n]+E_{\overline{Q}^n}\left[\mathcal{R}_{0,T}^\delta(\overline{Q}^n)\right]\\
&= \underset{n \to \infty}{\liminf}~\Gamma(\overline{Q}^n) = \underset{Q \in \mathcal{Q}_f}{\inf}~\Gamma(Q),
\end{align*}
which proves that indeed $\overline{Q}^\infty$ is optimal. Although $\overline{Y}^n$ is linear in $\overline{D}^n_T$, we cannot use Fatou's lemma since ther term $\mathcal{U}_{0,T}^\delta$ has no lower bound. To remediate this, we introduce the following:
$$\widetilde{R}_m \vcentcolon=\mathcal{U}_{0,T}^\delta \mathbbm{1}_{\mathcal{U}_{0,T}^\delta \ge -m} \ge -m,~~ m\in \N.$$
Hence, we have for $n \in \N \cup \{\infty\}$,
$$\overline{Y}^n=\overline{D}^n_T\mathcal{U}_{0,T}^\delta= \overline{D}^n_T \widetilde{R}_m + \overline{D}^n_T\mathcal{U}_{0,T}^\delta \mathbbm{1}_{\mathcal{U}_{0,T}^\delta < -m}.$$
Because now $\widetilde{R}_m$ is bounded below by $-m$, we can use Fatou's lemma to get,
$$E[\overline{D}^\infty \widetilde{R}_m] \le \underset{n \to \infty}{\liminf}~E[\overline{D}^n \widetilde{R}_m].$$
Consequently, by adding and subtracting $E[\overline{D}^n_T\mathcal{U}_{0,T}^\delta \mathbbm{1}_{\mathcal{U}_{0,T}^\delta < -m}]$, we obtain,
\begin{align*}
E[\overline{Y}^\infty] &\le \underset{n \to \infty}{\liminf}~ E[\overline{D}^n \widetilde{R}_m] + E[\overline{D}^n_T\mathcal{U}_{0,T}^\delta \mathbbm{1}_{\mathcal{U}_{0,T}^\delta < -m}]\\
&\le \underset{n \to \infty}{\liminf}~E[\overline{Y}^n] + 2 \underset{n \in N \cup \{\infty\}}{\sup}~E[\mathcal{D}^n_T|\mathcal{U}_{0,T}^\delta|\mathbbm{1}_{\mathcal{U}_{0,T}^\delta < -m}].
\end{align*}
The desired inequality \eqref{equation5} will follow once we prove that
$$\underset{m \to \infty}{\lim}~\underset{n \in N \cup \{\infty\}}{\sup}E[\mathcal{D}^n_T|\mathcal{U}_{0,T}^\delta|\mathbbm{1}_{\mathcal{U}_{0,T}^\delta < -m}] = 0,$$
and this is where we use Lemma \ref{lemIndic}. Indeed, thanks to this lemma, we have,
\begin{align*}
E[\mathcal{D}^n_T|\mathcal{U}_{0,T}^\delta|\mathbbm{1}_{\mathcal{U}_{0,T}^\delta < -m}] = E_{\overline{Q}^n}[|\mathcal{U}_{0,T}^\delta|\mathbbm{1}_{\mathcal{U}_{0,T}^\delta < -m}]&\le \frac{\gamma_0(\overline{Q}^n)}{\lambda K_1} + \frac{TK_2}{\lambda K_1} + \frac{e^{-1}}{\lambda}E[\exp(\lambda \mathbb{U)}\mathbbm{1}_{\mathcal{U}_{0,T}^\delta < -m}]\\
&\le \frac{C(1+\Gamma(\overline{Q}^n))}{\lambda K_1}+\frac{TK_2}{\lambda K_1} + \frac{e^{-1}}{\lambda}E[\exp(\lambda \mathbb{U)}\mathbbm{1}_{\mathcal{U}_{0,T}^\delta < -m}].
\end{align*}
Using \eqref{equation4}, we deduce that
$$
\underset{n \in N \cup \{\infty\}}{\sup}~E[\mathcal{D}^n_T|\mathcal{U}_{0,T}^\delta|\mathbbm{1}_{\mathcal{U}_{0,T}^\delta < -m}] \le \frac{C(1 +\max(\Gamma(Q^1), \Gamma(\overline{Q}^\infty)))}{\lambda K_1}+\frac{TK_2}{\lambda K_1} + \frac{e^{-1}}{\lambda}E[\exp(\lambda \mathbb{U)}\mathbbm{1}_{\mathcal{U}_{0,T}^\delta < -m}].$$
By the dominated convergence theorem, the third term in the RHS of the previous inequality goes to $0$ as $m \to \infty$. Hence, we for all $\lambda > 0$, we have
$$\underset{m \to \infty}{\lim}~\underset{n \in N \cup \{\infty\}}{\sup}~E[\mathcal{D}^n_T|\mathcal{U}_{0,T}^\delta|\mathbbm{1}_{\mathcal{U}_{0,T}^\delta < -m}] \le \frac{C(1 +\max(\Gamma(Q^1), \Gamma(\overline{Q}^\infty)))}{\lambda K_1}+\frac{TK_2}{\lambda K_1}.$$
Sending $\lambda$ to $\infty$, we finally obtain the desired result.
\end{proof}
\section{Related BSDE with jumps}
\label{BSDE}
This section is devoted to the study of the dynamic value process $V$ associated to the optimization problem \eqref{optProblem} using stochastic control techniques. More precisely, we prove that the dynamic process is the unique solution of a certain QEBSDEJ. This extends the previous work by \citet{schroder1999optimal}, \citet{skiadas2003robust} and \citet{lazrak2003generalized}.\\
We first introduce some notations that we will use below. Let $\mathcal{S}$ denote the set of all stopping time $\tau$ with values in $[0, T]$ and $\mathcal{D}$ the space of all density processes $D^Q$ with $Q\in \mathcal{Q}_f$. We also define,
\begin{align*}
\mathcal{D}(Q, \tau) &\vcentcolon = \{ Q^\prime \in \mathcal{Q}_f,~ Q^\prime = Q~\text{on}~\mathcal{F}_\tau\},\\
\Gamma(Q, \tau) &\vcentcolon = E_Q[c(\cdot, Q)|\mathcal{F}_\tau].
\end{align*}
As in \citet{karoui1981aspects}, we define the minimal conditional cost at time $\tau$ by
$$J(Q, \tau) \vcentcolon= Q-\underset{Q^{\prime} \in \mathcal{D}(Q, \tau)}{\essinf}~\Gamma(Q^{\prime}, \tau).$$
For $Q \in \mathcal{Q}_f$ and $\tau \in \mathcal{S}$, we now define the value of the control problem starting at time $\tau$ instead of $0$ and assuming one has used the model $Q$ up to time $\tau$,
\begin{align*}
\widetilde{V}(Q^\prime, \tau) &\vcentcolon=E_{Q^\prime}[\mathcal{U}_{\tau, T}^\delta |\mathcal{F}_\tau] + \beta E_{Q^\prime}[\mathcal{R}_{\tau, T}^\delta(Q^\prime) |\mathcal{F}_\tau],\\
V(Q, \tau) &\vcentcolon= Q-\underset{Q^\prime \in \mathcal{D}(Q, \tau)}{\essinf} \widetilde{V}(Q^\prime, \tau).
\end{align*}
The following martingale optimality principle is a consequence of Theorems 1.15, 1.17 and 1.21 in \citet{karoui1981aspects}. It is the analogue of Proposition 3.4 in \citet{faidi2013robust} in a Brownian setting but the proofs also hold in our setting with obvious modifications.
\begin{Proposition}
\label{bellman}
Under \ref{Au} and \ref{Ar}, we have:
\begin{itemize}
\item The family $\{J(Q, \tau)|\tau \in \mathcal{S}, Q \in \mathcal{Q}_f\}$ is a submartingale system, that is for any $Q \in \mathcal{Q}_f$ and stopping times $\sigma \le \tau$, we have,
\begin{equation}
\label{submart}
E_Q[J(Q,\tau)|\mathcal{F}_\sigma] \ge J(Q, \sigma)~~Q-a.s.
\end{equation}
\item $\widehat{Q} \in \mathcal{Q}_f$ is optimal if and only if the family $\{J(\widehat{Q}, \tau)|\tau \in \mathcal{S}\}$ is a $\widehat{Q}$-martingale system which means that for any stopping times $\sigma \le \tau$
$$E_{\widehat{Q}}[J(\widehat{Q}, \tau)|\mathcal{F}_\sigma]=J(\widehat{Q}, \sigma)~~\widehat{Q}-a.s.$$
\item For each $Q \in \mathcal{Q}_f$, there exists an adapted RCLL process $J^Q = (J^Q_t)_{t \in [0, T]}$ which is a right closed $Q$-submartingale such that for every stopping time $\tau$
$$J_\tau^Q = J(Q, \tau)~~Q-a.s.$$
\end{itemize}
\end{Proposition}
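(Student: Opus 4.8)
The plan is to deduce the three statements from the general theory of submartingale systems (Theorems 1.15, 1.17 and 1.21 of \citet{karoui1981aspects}), the only real work being to verify the structural hypotheses in the present jump setting: conditional integrability of the cost, the downward-lattice (\q{stability under pasting}) property of the admissible family, and the time-consistency/flow structure of the cost. Integrability is already available: conditioning \eqref{upperBoundGamma} and Lemma \ref{lemIndic} provide a $Q$-integrable upper and lower bound for $\Gamma(Q',\tau)$, uniformly over $Q'\in\mathcal{D}(Q,\tau)$, so that $J(Q,\tau)$ is a well-defined integrable essential infimum. The observation driving everything is that for $Q'\in\mathcal{D}(Q,\tau)$ the representing processes $(\eta,\psi)$ of \eqref{density0} coincide with those of $Q$ on $[0,\tau]$, so the contribution of $[0,\tau]$ to $c(\cdot,Q')$ is the \emph{same} $\mathcal{F}_\tau$-measurable quantity for every competitor; only the tail $\int_\tau^T$ distinguishes them, which is exactly what the time-consistent penalty \eqref{penalty} is built to exploit.

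First I would prove that $\{\Gamma(Q',\tau):Q'\in\mathcal{D}(Q,\tau)\}$ is downward directed. Given $Q_1,Q_2\in\mathcal{D}(Q,\tau)$ with densities $D^1,D^2$ and the set $B:=\{\Gamma(Q_1,\tau)\le\Gamma(Q_2,\tau)\}\in\mathcal{F}_\tau$, I would paste them after $\tau$ by setting $D^3_t:=D^Q_t$ for $t<\tau$ and $D^3_t:=D^Q_\tau(\mathbbm{1}_B D^1_t/D^1_\tau+\mathbbm{1}_{B^c}D^2_t/D^2_\tau)$ for $t\ge\tau$. One checks $D^3$ is a genuine density (a nonnegative $P$-martingale of total mass one), that $Q_3=Q$ on $\mathcal{F}_\tau$, and that $\gamma_0(Q_3)<\infty$, hence $Q_3\in\mathcal{Q}_f$: the representing processes of $Q_3$ equal those of $Q$ before $\tau$ and those of $Q_1$ (resp. $Q_2$) on $B$ (resp. $B^c$) after $\tau$, so $\int_\tau^T r\,dt$ splits over $B,B^c$ and stays finite by $Q_1,Q_2\in\mathcal{Q}_f$. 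By the same splitting, $\Gamma(Q_3,\tau)=\mathbbm{1}_B\Gamma(Q_1,\tau)+\mathbbm{1}_{B^c}\Gamma(Q_2,\tau)=\min(\Gamma(Q_1,\tau),\Gamma(Q_2,\tau))$. Downward-directedness then yields a sequence $Q^k\in\mathcal{D}(Q,\tau)$ with $\Gamma(Q^k,\tau)\searrow J(Q,\tau)$ $Q$-a.s.

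The submartingale system and the optimality characterization follow quickly. For $\sigma\le\tau$, since $\Gamma(Q^k,\tau)$ is $\mathcal{F}_\tau$-measurable and $Q^k=Q$ on $\mathcal{F}_\tau$, the tower property gives $E_Q[\Gamma(Q^k,\tau)\,|\,\mathcal{F}_\sigma]=E_{Q^k}[c(\cdot,Q^k)\,|\,\mathcal{F}_\sigma]=\Gamma(Q^k,\sigma)\ge J(Q,\sigma)$, because $Q^k\in\mathcal{D}(Q,\tau)\subset\mathcal{D}(Q,\sigma)$; letting $k\to\infty$ by conditional monotone convergence (legitimate thanks to the integrable lower bound from Lemma \ref{lemIndic}) yields \eqref{submart}. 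For optimality, triviality of $\mathcal{F}_0$ gives $J(Q,0)=\inf_{Q'\in\mathcal{Q}_f}\Gamma(Q')$ and $J(\widehat{Q},T)=c(\cdot,\widehat{Q})$, so $E_{\widehat{Q}}[J(\widehat{Q},T)]=\Gamma(\widehat{Q})$. If $\widehat{Q}$ is optimal I would show $J(\widehat{Q},\tau)=\Gamma(\widehat{Q},\tau)$ for all $\tau$: the inequality $\le$ is immediate, while a strict $<$ on a positive set would, after taking $E_{\widehat{Q}}$-expectations (which agree with $E_{Q'}$ on $\mathcal{F}_\tau$-measurable variables), produce a competitor with $\Gamma(Q')<\Gamma(\widehat{Q})$, a contradiction; since $\tau\mapsto E_{\widehat{Q}}[c(\cdot,\widehat{Q})\,|\,\mathcal{F}_\tau]$ is a genuine $\widehat{Q}$-martingale system, so is $J(\widehat{Q},\cdot)$. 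Conversely, the martingale system forces $E_{\widehat{Q}}[J(\widehat{Q},T)]=J(\widehat{Q},0)$, i.e. $\Gamma(\widehat{Q})=\inf\Gamma$, whence optimality.

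Finally, the third bullet is the aggregation of $\{J(Q,\tau)\}$ into an RCLL right-closed $Q$-submartingale $J^Q$, and this is the step I expect to be the main obstacle; it is exactly where Theorems 1.17 and 1.21 of \citet{karoui1981aspects} enter, a càdlàg modification existing once $t\mapsto E_Q[J(Q,t)]$ is shown to be right-continuous. I would derive this scalar regularity from the continuity in $t$ of the discounting $S^\delta$ and of the functionals $\int_t^T$ in the cost and penalty, together with the uniform integrability furnished by \eqref{upperBoundGamma}, \eqref{lowerBoundGamma} and Lemma \ref{lemIndic}. The jump structure enters only through the representation \eqref{density0} and leaves this argument untouched, which is why the Brownian proof of \citet{faidi2013robust} transfers with only cosmetic changes.
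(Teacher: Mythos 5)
Your proposal is correct and follows essentially the same route as the paper, which gives no written proof at all but simply invokes Theorems 1.15, 1.17 and 1.21 of \citet{karoui1981aspects} and the Brownian analogue in \citet{faidi2013robust}; the hypotheses you verify (integrability of the conditional cost, stability under pasting giving downward-directedness, and right-continuity of $t\mapsto E_Q[J(Q,t)]$ for the RCLL aggregation) are precisely the ones those theorems require. Your write-up is in fact more explicit than the paper's, but it is the same argument.
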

Before stating the BSDE verified by the value process $V$, we will need to define a strong order relation on the set of increasing processes defined below.
\begin{Definition}
Let $X$ and $Y$ two increasing processes. We say that $X \preceq Y$ if the process $Y -X$ is increasing.
\end{Definition}
\begin{Theorem}
Assume assumptions \ref{Ar}, \ref{Au} and \ref{ALind} hold. If the optimal probability $\overline{Q}^\infty$ in Theorem \ref{existence} is equivalent to $P$, then there exists $Z$ and $\widetilde{Z}$ such that $(V, Z, \widetilde{Z})$ is  solution in $D_0^{\exp} \times \mathcal{H}^{2, p} \times \mathcal{H}^{2, p}_\lambda$ of the following BSDE:
\begin{equation}
\label{QEBSDEJ}
\left\{
\begin{aligned}
dV_t &= \left(\delta_t V_t - \alpha U_t + \beta r^*\left(t,\frac{Z_t}{\beta},\xi_t \frac{\widetilde{Z}_t}{\beta}\right)\right)dt  - Z_tdW_t - \int_E \widetilde{Z}_t(x)\widetilde{\mu}(dx, dt),\\
V_T &= \overline{\alpha}\overline{U}_T.
\end{aligned}
\right.
\end{equation}
\end{Theorem}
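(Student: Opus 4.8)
The plan is to identify $V$ with the value process supplied by the martingale optimality principle (Proposition \ref{bellman}) and to read off the BSDE \eqref{QEBSDEJ} from its sub/super-martingale structure via Itô's formula and Fenchel duality. The first step is to relate the minimal conditional cost $J(Q,\tau)$ to $\widetilde V$. Splitting $\mathcal U_{0,T}^\delta$ and $\mathcal R_{0,T}^\delta(Q')$ at a stopping time $\tau$ and using that every competitor $Q'\in\mathcal D(Q,\tau)$ coincides with $Q$ on $\mathcal F_\tau$ (so the associated controls agree up to $\tau$), one obtains
$$\Gamma(Q',\tau)=A_\tau+S_\tau^\delta\,\widetilde V(Q',\tau),\qquad A_\tau:=\alpha\!\int_0^\tau S_s^\delta U_s\,ds+\beta\!\int_0^\tau S_s^\delta r(s,\eta_s,\psi_s)\,ds,$$
where $A_\tau$ is $\mathcal F_\tau$-measurable and independent of $Q'$. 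Taking the essential infimum over $Q'$ yields $J(Q,\tau)=A_\tau+S_\tau^\delta V(Q,\tau)$. Because the optimizer $\overline Q^\infty$ of Theorem \ref{existence} is assumed equivalent to $P$, the equivalence turns the $\overline Q^\infty$-a.s. statements of Proposition \ref{bellman} into $P$-a.s. ones, removes the dependence on the reference control, and guarantees that the RCLL aggregator $J^{\overline Q^\infty}$ defines $P$-a.s. a single process $V=(V_t)$ via $V=(J^{\overline Q^\infty}-A)/S^\delta$.

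Next I would establish the regularity needed to write a BSDE. The exponential integrability $V\in D_0^{\exp}$ follows from the a priori bounds of Section \ref{estimates}: $V_t$ is controlled above by $\widetilde V(\overline Q^\infty,t)$ and below by conditional versions of $-\mathbb U$, so assumption \ref{Au} together with Remark \ref{integrability} furnishes the required exponential moments. Since $J^{\overline Q^\infty}$ is a $P$-semimartingale (a right-closed submartingale for every $Q$ and an $\overline Q^\infty$-martingale), $V$ is a special semimartingale whose local-martingale part, by the weak representation property \eqref{representation}, admits predictable integrands $\zeta,\tilde\zeta$ with
$$V_t=V_0+\int_0^t b_s\,ds+\int_0^t\zeta_s\,dW_s+(\tilde\zeta\star\widetilde\mu)_t.$$
Setting $Z:=-\zeta$ and $\widetilde Z:=-\tilde\zeta$ reproduces the martingale terms $-Z\,dW-\widetilde Z\star\widetilde\mu$ of \eqref{QEBSDEJ}; the existence of the drift density $b$ has to be argued from the $dt$-absolutely continuous structure of $A$, $S^\delta$ and of the penalty.

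The core of the proof is the identification of $b$. Fix $Q\in\mathcal Q_f$ with controls $(\eta,\psi)$; under $Q$ one has $dW_t=dW_t^Q+\eta_t\,dt$ and $\widetilde\mu(dt,dx)=\big(\mu-(1+\psi_t(x))\nu\big)(dt,dx)+\psi_t(x)\nu(dt,dx)$. Applying Itô's formula to $J(Q,t)=A_t+S_t^\delta V_t$ (with $S^\delta$ continuous and of finite variation) produces the $Q$-drift
$$S_t^\delta\Big(\alpha U_t+\beta r(t,\eta_t,\psi_t)-\delta_t V_t+b_t+\zeta_t\!\cdot\!\eta_t+\int_E\tilde\zeta_t(x)\psi_t(x)\xi_t(x)\lambda(dx)\Big).$$
The submartingale property for every $Q$ forces the bracketed expression to be nonnegative for all admissible $(\eta,\psi)$, while the martingale property under $\overline Q^\infty$ forces it to vanish at the optimal controls; equivalently its minimum over $(\eta,\psi)$ is zero. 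Pulling out the control-independent terms and recognising a Fenchel conjugate gives, $dt\otimes dP$-a.e.,
$$b_t-\delta_t V_t+\alpha U_t=\sup_{(\eta,\psi)}\Big(-\zeta_t\!\cdot\!\eta-\int_E\tilde\zeta_t(x)\xi_t(x)\psi(x)\lambda(dx)-\beta r(t,\eta,\psi)\Big)=\beta\,r^*\!\Big(t,\tfrac{Z_t}{\beta},\xi_t\tfrac{\widetilde Z_t}{\beta}\Big),$$
by definition of $r^*$ (see \eqref{r*}). Thus $b_t=\delta_t V_t-\alpha U_t+\beta r^*(t,Z_t/\beta,\xi_t\widetilde Z_t/\beta)$, which is exactly the generator of \eqref{QEBSDEJ}; the terminal condition $V_T=\overline\alpha\,\overline U_T$ is immediate from $\mathcal U_{T,T}^\delta=\overline\alpha\,\overline U_T$ and $\mathcal R_{T,T}^\delta=0$. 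Finally, $Z\in\mathcal H^{2,p}$ and $\widetilde Z\in\mathcal H^{2,p}_\lambda$ are obtained from a priori estimates on the martingale part of $V$, combining the exponential integrability of $V$ with the growth bound \eqref{r*} on $r^*$.

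I expect the main obstacle to be the rigorous passage from the aggregate sub/super-martingale inequalities to the pointwise Hamiltonian identity: one must justify the existence of the drift density $b_t$, exchange the essential infimum with the $Q$-drift, and perform a measurable selection of the minimizing $(\eta,\psi)$ so that the supremum is attained and coincides $dt\otimes dP\otimes\lambda$-a.e. with $\beta r^*$. The convexity and lower semicontinuity of $r$, together with the constraint $\psi>-1$ (needed for $Q\ll P$, cf. \eqref{density}), are what make this selection feasible. A secondary difficulty, genuinely hard in this unbounded quadratic--exponential setting with jumps, is controlling the integrability of $(Z,\widetilde Z)$ in $\mathcal H^{2,p}\times\mathcal H^{2,p}_\lambda$.
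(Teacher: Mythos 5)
Your identification of the BSDE follows the paper's route almost exactly: the same splitting $J(Q,\tau)=A_\tau+S_\tau^\delta V(\tau)$, the same use of the equivalence $\overline Q^\infty\sim P$ to restrict to $\mathcal Q_f^e$ and take essential infima under $P$, the observation that $J^P$ (i.e.\ the case $\eta=0,\psi=0$) makes $V$ a $P$-special semimartingale, the weak representation property for its martingale part, Girsanov to compute the $Q$-drift of $J^Q$, and the sub/martingale dichotomy of Proposition \ref{bellman} to convert the drift inequality and equality into the Fenchel conjugate $\beta r^*(t,Z_t/\beta,\xi_t\widetilde Z_t/\beta)$. Up to that point the proposal is correct and coincides with Steps 1 and 2 of the paper's proof, including the sign conventions.

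The genuine gap is the final claim that $Z\in\mathcal H^{2,p}$ and $\widetilde Z\in\mathcal H^{2,p}_\lambda$ ``are obtained from a priori estimates on the martingale part of $V$, combining the exponential integrability of $V$ with the growth bound \eqref{r*}''. This is not a routine consequence; it is the content of the paper's entire Step 3 and requires a specific construction that you do not supply. The paper introduces the two exponential transforms $K^\pm_t=\exp\bigl(\pm CV_t+C\int_0^t(\alpha|U_s|+K_2\beta)\,ds+C\int_0^t\delta_s|V_s|\,ds\bigr)$ with $C=1/(K_1\beta)$, chosen precisely so that, by \eqref{r*}, the finite-variation parts $I^\pm$ of $\log K^\pm$ are increasing while the remaining parts are stochastic logarithms $\mathcal E(M^\pm)$ with $dM^\pm_t=\pm CZ_t\,dW_t+\int_E(e^{\pm C\widetilde Z_t(x)}-1)\widetilde\mu(dt,dx)$. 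One then bounds $E[\langle K^\pm\rangle_T^p]$ via the Garcia--Neveu lemma (using that $\sup_t K^\pm_t\in L^p$ for every $p$, which follows from $V\in D_0^{\exp}$ and $U\in D_1^{\exp}$), transfers this to $E[\langle M^\pm\rangle_T^p]<\infty$, and finally extracts the estimate on $\widetilde Z$ from \emph{both} transforms simultaneously through the inequality $|y|^2\le 2(|e^y-1|^2+|e^{-y}-1|^2)$; neither $K^+$ nor $K^-$ alone controls $\int_0^T\int_E|\widetilde Z_t(x)|^2\nu(dt,dx)$, since $|e^y-1|^2$ does not dominate $|y|^2$ as $y\to-\infty$. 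You explicitly flag this integrability as a ``genuinely hard'' difficulty without resolving it, so the proposal does not establish that the solution lies in the asserted spaces. A secondary, shared imprecision (present in the paper as well) is the passage from the drift inequality holding $dt\times dQ$-a.e.\ for each $Q\in\mathcal Q_f^e$ to the pointwise supremum over all $(\eta,\psi)\in\R^d\times L^2(\lambda)$; you correctly identify the measurable-selection issue but, like the paper, do not carry it out.
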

\begin{proof}
We will split the proof into three steps: First, we will prove that the value process $V$ is a $P$-special martingale, that is it can be decomposed as $V = V_0 + M^V + A^V$, where $M^V$ is a local martingale that can be written as $M^V = (Z\cdot W) + (\widetilde{Z} \star \widetilde{\mu})$ and $A^V$ a predictable finite variation process. Then, we will show that $(V, Z, \widetilde{Z})$ is a solution of the BSDE. Finally, we will prove that $(V, Z, \widetilde{Z})$ is in the required spaces. \\
\textbf{Step 1}: 
First, note that since we assumed that $\overline{Q}^\infty \sim P$,  then,
$$\underset{Q \in \mathcal{Q}_f}{\inf}\Gamma(Q) = \underset{Q \in \mathcal{Q}_f^e}{\inf}\Gamma(Q),$$
where $\mathcal{Q}_f^e \vcentcolon=\{Q \in \mathcal{Q}_f, ~Q\sim P\}$ and we define $\mathcal{D}^e(Q, \tau)$ accordingly. Hence, we will restrict our attention to probabilities $Q \in \mathcal{Q}_f^e$ and all essential infinimums  can be taken with respect to $P$ in the expression of $V(Q, \tau)$ and $J(Q, \tau)$, i.e.,
\begin{align*}
J(Q, \tau) &= P -\underset{Q^{\prime} \in \mathcal{D}^e(Q, \tau)}{\essinf}~\Gamma(Q^{\prime}, \tau),\\
V(Q, \tau) &= P-\underset{Q^\prime \in \mathcal{D}^e(Q, \tau)}{\essinf}~\widetilde{V}(Q^\prime, \tau).
\end{align*}
By Bayes' formula and the definition of $\mathcal{R}_{\tau, T}(Q^\prime)$, it is easy to see that $\widetilde{V}(Q^\prime, \tau)$ depends only on the values of the density process $D^{\prime}$ of $Q^\prime$ on $[\tau, T]$ and is therefore independent of $Q$. Hence, we can denote $V(Q, \tau)$ by $V(\tau)$. From the definition of $\mathcal{R}_{t,T}^\delta(Q^\prime)$ and $\mathcal{U}_{t,T}^\delta$, we have
\begin{align*}
\mathcal{R}_{0,T}^\delta(Q^\prime)&= \int_0^\tau S_t^\delta r(t,q_t^\prime, \psi_t^\prime)dt + S_\tau^\delta \mathcal{R}_{\tau,T}^\delta(Q^\prime),\\
\mathcal{U}_{0,T}^\delta &= \alpha \int_0^\tau S_t^\delta U_t dt + \mathcal{U}_{\tau,T}^\delta.
\end{align*}
Comparing $V(\tau)$ and $J(Q, \tau)$  yields for $Q \in \mathcal{Q}_f^e$ with density process $D^Q = \mathcal{E}((\eta\cdot W) + (\psi \star \widetilde{\mu}))$,
\begin{equation}
\label{equation6}
J(Q, \tau) = S_\tau^\delta V(\tau) + \alpha \int_0^\tau S_t^\delta U_t dt + \beta\int_0^\tau S_t^\delta r(t,\eta_t, \psi_t)dt,~ P-a.s.
\end{equation}
From the martingale optimality principle in Proposition \ref{bellman}, there exists an adapted RCLL process denoted $J^Q = (J_t^Q)_{t\in [0, T]}$ such that $J_\tau^Q = J(Q, \tau)$,  $Q-a.s.$ From \eqref{equation6}, we deduce that we can choose an adapted RCLL process $(V_t)_{t\in [0, T]}$ such that $V_\tau = V(\tau),~P-a.s.$ for all $\tau \in \mathcal{S}$. We can then rewrite \eqref{equation6} for every $Q \in \mathcal{Q}_f^e$ as,
\begin{equation}
\label{equation7}
 J_t^Q = S_t^\delta V_t + \alpha\int_0^t S_s^\delta U_s ds + \beta\int_0^t S_s^\delta r(s,\eta_s, \psi_s)dt,~dt\times dP-a.e.
\end{equation}
As the probability $P \in \mathcal{Q}_f^e$ corresponds to $\eta=0$ and $\psi=0$ and $r(t,0,0) = 0$, we get in particular for $Q=P$ in \eqref{equation7} that $J^P = S^\delta V + \alpha \int_0 S_s^\delta U_s ds$. By Proposition \ref{bellman}, $J^P$ is a $P$- submartingale  and thus we deduce that $V$ is a $P$-special semimartingale, i.e. its canonical decomposition can be written as
\begin{equation}
\label{V}
V = V_0 + M^V + A^V,
\end{equation}
where $M^V$ is a local martingale and $A^V$ is a predictable finite variation process. By the weak representation assumption, the local martingale $M^V$ can be written as:
$$M^V = -(Z\cdot W) - (\widetilde{Z}\star \widetilde{\mu}).$$
\textbf{Step 2}: We now prove that $(V, Z, \widetilde{Z})$ is a solution of QEBSDEJ in \eqref{QEBSDEJ}. Plugging \eqref{V} into \eqref{equation7} yields
$$dJ_t^Q = -\delta_t S_t^\delta V_t dt + \alpha S_t^\delta U_t dt + S_t^\delta\left(\beta r(t, \eta_t, \psi_t)dt - Z_t dW_t -\int_E \widetilde{Z}_t(x)\widetilde{\mu}(dx, dt) + dA_t^V\right).$$
For each $Q\in \mathcal{Q}_f^e$, we have, $D^Q = \mathcal{E}((\eta\cdot W) + (\psi\star\widetilde{\mu})),~P-a.s.$ and by Girsanov's theorem, we have, $dW_t^Q = dW_t - \eta_t dt$ is a $Q$ Brownian motion and $\nu^Q(dx, dt) = (1+\psi_t(x))\nu(dt, dx)$ is the compensation of $\mu$ under $Q$. Rewriting the dynamic of $J^Q$, we obtain,
\begin{multline}
dJ_t^Q = -\delta_t S_t^\delta V_t dt + \alpha S_t^\delta U_t dt +S_t^\delta\left(\beta r(t, \eta_t,\psi_t)dt-Z_t\eta_tdt - \int_E\widetilde{Z}_t(x)\psi_t(x)\nu(dx, dt) + dA_t^V\right)\\ - S_t^\delta\left(Z_tdW_t^Q +\int_E\widetilde{Z}_t(x)\widetilde{\mu}^Q(dx,dt)\right).
\end{multline}
But, we know thanks to Proposition \ref{bellman}, that for every $Q\in \mathcal{Q}_f^e$, $J^Q$ is a $Q$- submartingale and $J^{\overline{Q}^\infty}$ is a $\overline{Q}^\infty$-martingale. This means that we should have, 
\begin{align*}
dA_t^V &\ge Z_t\eta_tdt + \int_E\widetilde{Z}_t(x)\psi_t(x)\nu(dx, dt) - \beta r(t, \eta_t,\psi_t)dt+ \delta_t V_t dt-\alpha S_t^\delta U_t dt,~dt\times dQ-a.e.\\
dA_t^V &= Z_t\overline{\eta}^\infty_tdt + \int_E\widetilde{Z}_t(x)\overline{\psi}^\infty_t(x)\nu(dx, dt) - \beta r(t, \overline{\eta}^\infty_t,\overline{\psi}^\infty_t)+ \delta_t V_t dt-\alpha S_t^\delta U_t dt,~dt \times d\overline{Q}^\infty-a.e.
\end{align*}
Note that the above inequality and equality are verified $dt\times dP-a.e.$ since $Q \in \mathcal{Q}_f^e$ and by the assumption that $\overline{Q}^\infty \in \mathcal{Q}_f^e$, in which case they become equivalent to,
\begin{align}
\label{A1}
dA_t^V &\ge \underset{\eta_t \in \R^d, \psi_t \in L^2(\lambda)}{\esssup}\left(Z_t\eta_tdt + \int_E\widetilde{Z}_t(x)\psi_t(x)\nu(dx, dt) - \beta r(t, \eta_t,\psi_t)dt\right)+ \delta_t V_t dt-\alpha S_t^\delta U_t dt,~dt\times dP-a.e.\\
dA_t^V &= Z_t\overline{\eta}^\infty_tdt + \int_E\widetilde{Z}_t(x)\overline{\psi}^\infty_t(x)\nu(dx, dt) - \beta r(t, \overline{\eta}^\infty_t,\overline{\psi}^\infty_t)+ \delta_t V_t dt-\alpha S_t^\delta U_t dt,~dt \times dP-a.e.
\end{align}
By denoting 
$$r^*(t, z, \widetilde{z}) = \underset{\eta \in \R^d, \psi \in L^2(\lambda)}{\sup} (z\cdot \eta + \int_E z(x) \psi(x) \lambda(dx) - r(t, \eta, \psi)),$$
the Fenchel conjugate of $r$, equation \eqref{A1} implies that $dt\times dP-a.e.$,
\begin{equation}
\label{A2}
\begin{aligned}
dA_t^V &= \underset{\eta_t \in \R^d, \psi_t \in L^2(\lambda)}{\esssup}\left(Z_t\eta_tdt + \int_E\widetilde{Z}_t(x)\psi_t(x)\nu(dx, dt) - \beta r(t, \eta_t,\psi_t)dt\right)+ \delta_t V_t dt-\alpha S_t^\delta U_t dt\\
&= \beta r^*(t, \frac{1}{\beta}Z_t, \frac{1}{\beta} \xi_t \widetilde{Z}_t)dt + \delta_t V_tdt -\alpha S_t^\delta U_t dt\\
&= Z_t\overline{\eta}^\infty_tdt + \int_E\widetilde{Z}_t(x)\overline{\psi}^\infty_t(x)\nu(dx, dt) - \beta r(t, \overline{\eta}^\infty_t,\overline{\psi}^\infty_t)+ \delta_t V_t dt-\alpha S_t^\delta U_t dt.
\end{aligned}
\end{equation}
This shows in particular that 
\begin{equation}
\label{subdiff}
\left(\frac{Z_t}{\beta}, \frac{\widetilde{Z}_t}{\beta}\xi_t\right) \in \partial r(t, \overline{\eta}^\infty_t, \overline{\psi}^\infty_t),~~dt\times dP-a.e.
\end{equation}
Going back to equation \eqref{V} and replacing the finite variation process $A^V$ by its expression in \eqref{A2}, it follows that $(V, Z, \widetilde{Z})$ is solution of the following equation,
\begin{equation*}
\left\{
\begin{aligned}
dV_t &= \left(\delta_t V_t - \alpha U_t + \beta r^*\left(t,\frac{Z_t}{\beta},\xi_t \frac{\widetilde{Z}_t}{\beta}\right)\right)dt  - Z_tdW_t - \int_E \widetilde{Z}(x)\widetilde{\mu}(dx, dt),\\
V_T &= \overline{\alpha}\overline{U}_T.
\end{aligned}
\right.
\end{equation*}
\textbf{Step 3}: In this step, we show that the $(V, Z, \widetilde{Z}) \in D_0^{\exp} \times \mathcal{H}^{2, p}_{\lambda} \times \mathcal{H}^{2, 2}$. $V \in D_0^{\exp}$ follows as in \citet{faidi2013robust}. As for $Z$ and $\widetilde{Z}$, the proof will lean on some exponential transform. We introduce the following processes defined for $t\in [0, T]$ as:
\begin{align*}
Y_t^- &= -C V_t +  C \int_0^t (\alpha |U_s| + K_2 \beta) ds + C\int_0^t \delta_s |V_s|ds,\\
Y_t^+ &= C V_t + C\int_0^t (\alpha |U_s| + K_2 \beta) ds + C\int_0^t  \delta_s |V_s|ds,\\
K_t^- &= \exp(Y_t^-),~K_t^+ = \exp(Y_t^+),
\end{align*}
where $C=\frac{1}{K_1 \beta}$. For any $p \ge 1$, we have
\begin{align*}
\underset{t\in [0, T]}{\sup}~(K_t^{\pm})^p &= \underset{t\in [0, T]}{\sup}~\exp(pY_t^{\pm}) \le \underset{t\in [0, T]}{\sup}~\exp(pC|V_t| +  p C\int_0^t (\alpha|U_s| + K_2 \beta) ds + p C\int_0^t  \delta_s |V_s|ds)\\
&\le \exp(pC\underset{t\in [0, T]}{\sup}~|V_t| + p C\alpha\int_0^T |U_s|ds + pCK_2\beta T + pC||\delta||_\infty  T \underset{t\in [0, T]}{\sup}~|V_t|).
\end{align*}
Since $V \in D_0^{\exp}$ and $U \in D_1^{\exp}$, from the above inequality we deduce that $\underset{t\in [0, T]}{\sup}~K_t^{\pm} \in L^p(\Omega)$. We turn our attention to the process $Y^-$. Using \eqref{QEBSDEJ}, the process $Y^-$ verifies:
\begin{align*}
dY_t^- &= -CdV_t + C(\alpha |U_t|+K_2\beta)dt + C\delta_t |V_t|dt\\
&=C\left( \delta_t(|V_t|-V_t) + \alpha (|U_t|+U_t) + K_2\beta - \beta r^*\left(t, \frac{Z_t}{\beta}, \xi_t\frac{\widetilde{Z}_t}{\beta}\right)\right)dt + C Z_t dW_t + \int_E C\widetilde{Z}_t(x)\widetilde{\mu}(dt, dx)\\
&=\left(CK_2\beta -C\beta r^*\left(t, \frac{Z_t}{\beta}, \xi_t\frac{\widetilde{Z}_t}{\beta}\right) + \frac{|CZ_t|^2}{2} + \int_E f^*(C\widetilde{Z}_t(x))\xi_t(x)\lambda(dx)\right)dt +C\delta_t(|V_t|-V_t)dt \\
&~~~~~+C\alpha (|U_t|+U_t)dt+ CZ_tdW_t - \frac{|CZ_t|^2}{2}dt + \int_E C\widetilde{Z}_t(x) \widetilde{\mu}(dt, dx) - \int_E f^*(C\widetilde{Z}_t(x))\xi_t(x)\lambda(dx)dt\\
&= dI_t^- + d L_t^-,
\end{align*}
where
\begin{equation*}
\left\{
\begin{aligned}
dI_t^- &= \left(C\delta_t(|V_t|-V_t) + C\alpha (|U_t|+U_t)+CK_2\beta +C\beta r^*\left(t, \frac{Z_t}{\beta}, \xi_t\frac{\widetilde{Z}_t}{\beta}\right)+ \frac{|CZ_t|^2}{2}\right)dt,\\
&~~~~~~~~~~+ \int_E f^*(C\widetilde{Z}_t(x))\xi_t(x)\lambda(dx)dt\\
dL_t^- &= CZ_tdW_t - \frac{|CZ_t|^2}{2}dt + \int_E C\widetilde{Z}_t(x) \widetilde{\mu}(dt, dx) - \int_E f^*(C\widetilde{Z}_t(x))\xi_t(x)\lambda(dx)dt.
\end{aligned}
\right.
\end{equation*}
Thanks to inequality given in \eqref{r*}, we have the following:
$$-C\beta r^*\left(t, \frac{Z_t}{\beta}, \xi_t\frac{\widetilde{Z}_t}{\beta}\right) + CK_2 \beta+ \frac{|CZ_t|^2}{2} + \int_E f^*(C\widetilde{Z}_t(x))\xi_t(x)\lambda(dx) \ge 0,~~dt\times dP-a.e.$$
It is also easy to see, by the definition of Dol\'eans-Dade's exponential, that $\exp(L_t^-) = \mathcal{E}(M^-)_t$ where, $dM_t^- = CZ_tdW_t + \int_E (e^{C\widetilde{Z}_t(x)}-1) \widetilde{\mu}(dt, dx)$. Therefore, we obtain,
$$K_t^- \underset{(\Delta)}{=} \exp(Y_t^-) = \exp(V_0)\exp(I_t^-)\exp(L_t^-)=  \exp(V_0)\exp(I_t^-)\mathcal{E}(M^-)_t.$$
Using the integration by part formula, we get, $dK_t^- = K_{t^-}^-(dM_t^- + dI_t^-)$, which implies, that the predictable quadratic variation of $K^-$ verifies, $d\langle K^-\rangle_t = (K_{t^-}^-)^2 d\langle M^-\rangle_t$ and as a consequence, $d\langle M^-\rangle_t = \frac{1}{(K_{t^-}^-)^2}d\langle K^-\rangle_t$. Hence, 
\begin{equation}
\label{equation8}
\langle M^-\rangle_{T}\le \underset{t\in [0, T]}{\sup}~\left(\frac{1}{(K_t^-)^2}\right)\times \langle K^-\rangle_{T}.
\end{equation}
Now, we need to have an estimate for $\langle K^-\rangle$ in order to get one for $\langle M^-\rangle$. It\^{o}'s formula yields,
$$d(K_t^-)^2 = 2K_{t^-}^- dK_t^- + d[K^-]_t = 2(K_{t^-}^-)^2(dM_t^- + dI_t^-)+ d[K^-]_t.$$
Taking a sequence of stopping times $(T_n)$ such that for each $n\in \N$, $(\int_0^{t\wedge T_n} 2 (K_{s^-}^-)^2dM_s^-)_t$ is a uniformly integrable martingale and integrating the above equation between a stopping time $\sigma \le T$ and $T \wedge T_n$, we get,
$$[K^-]_{T\wedge T_n} - [K^-]_\sigma = (K_{T \wedge T_n}^-)^2-(K_\sigma^-)^2 - 2 \int_\sigma^{T \wedge T_n}  (K_{t^-}^-)^2(dM_t^- + dI_t^-).$$
Since $ \int_0^{T_n}  (K_{t^-}^-)^2dI_t^- \ge 0$, by taking conditional expectations on both sides, we obtain,
$$E[\langle K^-\rangle_{T\wedge T_n} - \langle K^-\rangle_\sigma|\mathcal{F}_\sigma] = E[[K^-]_{T\wedge T_n} - [K^-]_\sigma|\mathcal{F}_\sigma] \le E[(K_{T\wedge T_n}^-)^2|\mathcal{F}_\sigma]\le E[\underset{t\in [0, T]}{\sup}~(K_t^-)^2|\mathcal{F}_\sigma].$$
Finally, passing to the limit as $n \to + \infty$ and using the Monotone Convergence theorem, we have,
$$E[\langle K^-\rangle_{T}-\langle K^-\rangle_\sigma|\mathcal{F}_\sigma] \le E[\underset{t\in [0, T]}{\sup}~(K_t^-)^2|\mathcal{F}_\sigma].$$
Now, since for every $p\ge 1$, $\underset{t\in [0, T]}{\sup}~K_t^- \in L^P(\Omega)$, it follows from Garcia and Neveu Lemma (see for example Lemma 4.3 in \citet{barrieu2013monotone} or \citet{neveu1972martingales}) that 
\begin{equation}
\label{equation9}
E[\langle K^-\rangle_{T}^p] < \infty,~~\forall p \ge 1.
\end{equation}
With the same arguments used to show that $\underset{t\in [0, T]}{\sup}~K_t^- \in L^p(\Omega)$, we have also that $\underset{t\in [0, T]}{\sup}~\frac{1}{K_t^-} \in L^p(\Omega)$ for any $p \ge 1$. From \eqref{equation8} and \eqref{equation9} together with Cauchy-Schwartz inequality, we deduce that
\begin{equation}
\label{equation10}
E[\langle M^-\rangle_{T}^p]   < \infty,~~\forall p \ge 1.
\end{equation}
As for the process $Y^+$, it verifies, $dY^+_t = dI_t^+ + dL_t^+$ where,
\begin{equation*}
\left\{
\begin{aligned}
dI_t^+ &= \left(C\delta_t(|V_t|+V_t) + C\alpha (|U_t|-U_t)+CK_2\beta +C\beta r^*\left(t, \frac{Z_t}{\beta}, \xi_t\frac{\widetilde{Z}_t}{\beta}\right)+ \frac{|CZ_t|^2}{2}\right)dt\\
&~~~~~~~~~~+ \int_E f^*(-C\widetilde{Z}_t(x))\xi_t(x)\lambda(dx)dt,\\
dL_t^+ &= -CZ_tdW_t - \frac{|CZ_t|^2}{2}dt - \int_E C\widetilde{Z}_t(x) \widetilde{\mu}(dt, dx) - \int_E f^*(-C\widetilde{Z}_t(x))\xi_t(x)\lambda(dx)dt.
\end{aligned}
\right.
\end{equation*}
As $r^*$ and $f^*$ are positive functions, the process $I_t^+$ is increasing and as previously, by easy calculations, we can see that $\exp(L_t^+)=\mathcal{E}(M^+)_t$ where $dM^+_t = -CZ_tdW_t +\int_E(e^{-C\widetilde{Z}_t(x)}-1)\widetilde{\mu}(dt, dx)$. Going through the same lines as with $Y^-$, we obtain,
\begin{equation}
\label{equation11}
E[\langle M^+\rangle_{T}^p] < \infty,~~\forall p \ge 1.
\end{equation}
But, expressing the expression of predictable quadratic variation of $M^+$ and $M^-$, we get,
\begin{align*}
E\left[ \left(\int_0^T |CZ_t|^2 dt + \int_0^T\int_E (e^{C\widetilde{Z}_t(x)} - 1)^2\nu(dt, dx)\right)^p\right] < \infty,~~p \ge 1,\\
E\left[ \left(\int_0^T |CZ_t|^2 dt + \int_0^T\int_E (e^{-C\widetilde{Z}_t(x)} - 1)^2\nu(dt, dx)\right)^p\right] < \infty,~~p \ge 1.
\end{align*}
This implies from one hand that,
$$ E\left[ \left(\int_0^T |Z_t|^2 dt\right)^p\right] < \infty,~~p \ge 1,$$
and from the other hand, using the fact that $|y|^2 \le 2(|e^y-1|^2 + |e^{-y}-1|^2),~y \in \R$, we get that,
$$ E\left[ \left(\int_0^T \int_E |\widetilde{Z}_t(x)|^2 \nu(dt,dx)\right)^p \right] < \infty,~~ p\ge 1.$$
In conclusion, we have showed that $Z \in \mathcal{H}^{2, p}$ and $\widetilde{Z} \in \mathcal{H}^{2, p}_\lambda$.
\end{proof}
In the next proposition, we establish a comparison theorem for the class of BSDEs in \eqref{QEBSDEJ}. For two random variables, we write $A \le B$ if $A \le B~P-a.s.$ and for two processes $X$ and $Y$, we write $X \le Y$ if $\forall t \in [0, T]$,  $X_t \le Y_t$ $P-a.s.$ Finally, we write $(A, X) \le (B, Y)$ if $A \le B$ and $X \le Y$.
\begin{Proposition}
\label{comparison}
Assume that for $k=1,2$, $(V^k, Z^k, \widetilde{Z}^k)$ is a solution of the BSDE \eqref{QEBSDEJ} in $D_0^{\exp}\times \mathcal{H}^{2,p}\times \mathcal{H}^{2,p}_\lambda$ associated with $(U^k, \overline{U}_T^k)$. If $(U^1, \overline{U}_T^1) \le (U^2, \overline{U}_T^2)$, then,
$$\forall t \in [0, T], ~~ V_t^1 \le V_t^2~~ P-a.s.$$
\end{Proposition}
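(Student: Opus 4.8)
The plan is to show that $\delta V := V^1 - V^2 \le 0$. Subtracting the two instances of \eqref{QEBSDEJ}, the triple $(\delta V,\delta Z,\delta\widetilde Z):=(V^1-V^2,Z^1-Z^2,\widetilde Z^1-\widetilde Z^2)$ satisfies
$$d(\delta V_t) = \Big(\delta_t\,\delta V_t - \alpha(U^1_t-U^2_t) + \beta\big(r^*_{1,t}-r^*_{2,t}\big)\Big)dt - \delta Z_t\,dW_t - \int_E \delta\widetilde Z_t(x)\,\widetilde\mu(dx,dt),$$
where $r^*_{k,t}:=r^*\big(t,Z^k_t/\beta,\xi_t\widetilde Z^k_t/\beta\big)$, with terminal value $\delta V_T=\overline\alpha(\overline U^1_T-\overline U^2_T)\le 0$ and time-$t$ source $-\alpha(U^1_t-U^2_t)\ge 0$, both signs following from the hypothesis $(U^1,\overline U^1_T)\le(U^2,\overline U^2_T)$ together with $\alpha,\overline\alpha\ge 0$.

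The first step is to linearize the convex term $\beta(r^*_{1,t}-r^*_{2,t})$ using the conjugacy already exploited in the paper. From the definition of $r^*$ and a scaling computation, $\beta r^*(t,z/\beta,\xi_t\widetilde z/\beta)=\sup_{(\eta,\psi)}\big(z\cdot\eta+\int_E\xi_t\widetilde z(x)\psi(x)\lambda(dx)-\beta r(t,\eta,\psi)\big)$. The coercivity furnished by \ref{Ar}, combined with the convexity and lower semicontinuity of $r$, guarantees that the supremum defining $r^*_{2,t}$ is attained; a measurable selection provides predictable $(\eta^2_t,\psi^2_t)$ realizing it, equivalently $\big(Z^2_t/\beta,\xi_t\widetilde Z^2_t/\beta\big)\in\partial r(t,\eta^2_t,\psi^2_t)$, as in \eqref{subdiff}. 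Since $(\eta^2_t,\psi^2_t)$ is admissible but in general suboptimal in the supremum defining $r^*_{1,t}$, subtracting the two expressions yields the one-sided bound
$$\beta\big(r^*_{1,t}-r^*_{2,t}\big)\ge \eta^2_t\cdot\delta Z_t+\int_E \psi^2_t(x)\,\delta\widetilde Z_t(x)\,\xi_t(x)\lambda(dx),\qquad dt\times dP\text{-a.e.}$$

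Next I would perform the change of measure associated with these controls. Using the growth estimate \eqref{r*} on $r^*$ and \ref{Ar} on $r$ together with the regularity $Z^2,\widetilde Z^2\in\mathcal H^{2,p}$ and $V^2\in D_0^{\exp}$, one checks that $(\eta^2,\psi^2)$ satisfy $\psi^2>-1$ and induce an equivalent probability $Q^2\in\mathcal Q_f^e$, so that $D^2:=\mathcal E\big((\eta^2\cdot W)+(\psi^2\star\widetilde\mu)\big)$ is a genuine $P$-martingale. By Girsanov's theorem, $W^{Q^2}:=W-\int_0^\cdot\eta^2_s\,ds$ is a $Q^2$-Brownian motion and $\widetilde\mu^{Q^2}(dx,dt):=\mu(dx,dt)-(1+\psi^2_t(x))\nu(dx,dt)$ is the $Q^2$-compensated measure. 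Rewriting the dynamics of $\delta V$ under $Q^2$, the terms $\eta^2_t\cdot\delta Z_t$ and $\int_E\psi^2_t\,\delta\widetilde Z_t\,\xi_t\lambda(dx)$ are absorbed into the compensators, and the linearization bound gives that the $Q^2$-drift of $\delta V$ is at least $\delta_t\delta V_t-\alpha(U^1_t-U^2_t)$. Introducing the discount $\Gamma_t:=S^\delta_t$ and applying Itô's formula, the $Q^2$-drift of $\Gamma_t\delta V_t$ is bounded below by $-\alpha\Gamma_t(U^1_t-U^2_t)\ge 0$, so $\Gamma\,\delta V$ is a $Q^2$-submartingale (after localization).

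Finally I would conclude: a $Q^2$-submartingale with $\Gamma_T\delta V_T\le 0$ satisfies $\Gamma_t\delta V_t\le E_{Q^2}[\Gamma_T\delta V_T\,|\,\mathcal F_t]\le 0$, whence $V^1_t\le V^2_t$ $P$-a.s. for every $t$, since $Q^2\sim P$ and $\Gamma>0$. The \emph{main obstacle} is the rigorous justification of the measure change and of the submartingale property: one must verify that the selected controls $(\eta^2,\psi^2)$ genuinely yield an equivalent probability in $\mathcal Q_f^e$ (so that $\mathcal E(\cdots)$ is a true, not merely local, martingale and $\psi^2>-1$), and that the $Q^2$-local-martingale parts are true martingales, so the conditional-expectation step is legitimate. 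This is exactly where the exponential-integrability spaces are used: $V\in D_0^{\exp}$, $Z,\widetilde Z\in\mathcal H^{2,p}$ for all $p$, and the estimates \ref{Ar}–\eqref{r*} supply the uniform integrability needed to localize and pass to the limit.
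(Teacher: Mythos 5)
Your strategy is the classical one for convex quadratic generators: linearize $r^{*,1}-r^{*,2}$ through a subgradient of $r^*$ at $(Z^2_t/\beta,\xi_t\widetilde Z^2_t/\beta)$, absorb the resulting linear terms by a Girsanov change of measure driven by the maximizing controls $(\eta^2,\psi^2)$, and conclude by a submartingale argument under the new measure $Q^2$. This is genuinely different from the paper's proof, which never changes measure: it estimates $V^1-\theta V^2$ for $\theta\in(0,1)$, uses convexity of $r^*$ to bound $r^{*,1}-\theta r^{*,2}$ by $(1-\theta)r^{*,\theta}$ plus the growth estimate \eqref{r*}, kills the resulting quadratic and exponential terms by the explicit exponential transform $P_t=\exp\left(c(\theta)S_t^\delta V_t^\theta\right)$ with $c(\theta)=-e^{-||\delta||_\infty T}/(K_1\beta(1-\theta))$, and only then localizes and lets $\theta\nearrow 1$.

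The step you yourself flag as the main obstacle is, in this unbounded setting, a genuine gap rather than a technicality, and it is precisely the gap the paper's method is designed to avoid. From $(Z^2,\widetilde Z^2)\in\mathcal H^{2,p}\times\mathcal H^{2,p}_\lambda$ and $V^2\in D_0^{\exp}$ you have no control of the type needed to make $D^2=\mathcal E\left((\eta^2\cdot W)+(\psi^2\star\widetilde\mu)\right)$ a true martingale: Novikov, Kazamaki, or the L\'epingle--M\'emin criteria for jumps require exponential moments of $\int_0^T|\eta^2_s|^2ds$ or a BMO bound, whereas assumption \ref{Ar} combined with the Fenchel equality only transfers polynomial moments of $Z^2$ into polynomial moments of $\eta^2$. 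Even granting an equivalent $Q^2$, the passage from the local submartingale $S^\delta\,\delta V$ to a true one requires uniform integrability of $S^\delta_{\tau_n}\delta V_{\tau_n}$ under $Q^2$, while $V^k\in D_0^{\exp}$ gives exponential moments under $P$ only; to transfer them you would need $H(Q^2|P)<\infty$, i.e.\ $Q^2\in\mathcal Q_f$, which is an additional unproved claim about the selected controls. Finally, the attainment of the supremum defining $r^{*,2}$ and the measurability of the maximizer $(\eta^2,\psi^2)$ are asserted rather than proved; the coercivity in $\psi$ supplied by \ref{Ar} is of entropic (Orlicz) type, not $L^2(\lambda)$-coercive, so attainment in $L^2(\lambda)$ is not automatic. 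Each of these points might be repaired under stronger hypotheses (bounded terminal data, BMO estimates on $Z^2$), but as written the argument does not close in the framework of the proposition; the $\theta$-difference plus exponential change of variables in the paper works entirely under $P$ and sidesteps all three issues.
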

\begin{proof}
In general, establishing comparison theorems for BSDEs is obtained through an estimate of the quantity $((V^1-V^2)^+)^2$. Here, in order to take advantage of the convexity of the finite variation part of the BSDE, we will rather estimate $V^1 - \theta V^2$ for each $\theta \in (0, 1)$. Similar idea was used in \citet{briand2008quadratic} for the continuous case.\\
Let $\theta \in (0, 1)$ and $V^\theta = V^1 -\theta V^2$. We define accordingly $Z^\theta$, $\widetilde{Z}^\theta$, $U^\theta$ and $\overline{U}_T^\theta$. From \eqref{QEBSDEJ}, the dynamics of the process $V^\theta$ discounted are given by
\begin{align*}
dS_t^\delta V^\theta_t &= S_t^\delta \left[-\alpha U_t^\theta + \beta \left(r^*\left(t, \frac{Z_t^1}{\beta}, \xi_t \frac{\widehat{Z}_t^1}{\beta}\right) -\theta r^*\left(t, \frac{Z_t^2}{\beta}, \xi_t \frac{\widehat{Z}_t^2}{\beta}\right)\right)\right]dt\\
&~~~~~~~~~~~~-S_t^\delta Z_t^\theta dW_t -\int_E S_t^\delta \widetilde{Z}_t^\theta(x) \widetilde{\mu}(dx, dt)\\
&= S_t^\delta (-\alpha U_t^\theta + \beta (r^{*,1} -\theta r^{*,2}))dt-S_t^\delta Z_t^\theta dW_t -\int_E S_t^\delta \widetilde{Z}_t^\theta(x) \widetilde{\mu}(dx, dt),
\end{align*}
where, to alleviate the notations, we have denoted, $r^{*,i} = r^*(t, \frac{Z_t^i}{\beta}, \xi_t \frac{\widetilde{Z}_t^i}{\beta})$. Now, since $r^*$ is convex, the term  $r^{*,1} -\theta r^{*,2}$ can be bounded from above. Indeed,
\begin{equation}
\label{equation17}
\begin{aligned}
r^{*,1} = r^*\left(t, \frac{Z_t^1}{\beta}, \xi_t \frac{\widehat{Z}_t^1}{\beta}\right) &\le \theta r^*\left(t, \frac{Z_t^2}{\beta}, \xi_t \frac{\widehat{Z}_t^2}{\beta}\right) + (1-\theta) r^*\left(t, \frac{Z^\theta_t}{\beta(1-\theta)}, \xi_t\frac{\widetilde{Z}^\theta_t}{\beta(1-\theta)}\right) \\
&\vcentcolon= \theta r^{*,2} + (1-\theta) r^{*,\theta}.
\end{aligned}
\end{equation}
Moreover, thanks to \eqref{r*}, we have
$$r^{*,\theta} \le K_2 + \frac{|Z_t^\theta|^2}{2K_1 \beta^2(1-\theta)^2} + K_1 \int_E f^*\left(\frac{\widetilde{Z}^\theta_t(x)}{K_1\beta(1-\theta)}\right)\xi_t(x)\lambda(dx).$$
Using this last inequality in \eqref{equation17}, we get that,
\begin{equation}
\label{equation18}
\beta(r^{*,1} - \theta r^{*,2}) \le K_2 \beta (1-\theta) + \frac{|Z_t^\theta|^2}{2K_1 \beta(1-\theta)} + K_1 \beta (1-\theta) \int_E f^*\left(\frac{\widetilde{Z}^\theta_t(x)}{K_1\beta(1-\theta)}\right)\xi_t(x)\lambda(dx).
\end{equation}
To get rid of the quadratic and exponential terms in the inequality above, we will use an exponential change of variables. More precisely, let $c$ be a negative constant (to be specified later), and set $P_t = \exp(cS_t^\delta V_t^\theta)$, $Q_t = cS_t^\delta P_{t^-} Z_t^\theta$ and $\widetilde{Q}_t =cS_t^\delta P_{t^-} \widetilde{Z}_t^\theta$ . Using It\^{o}'s formula, we deduce,
\begin{align*}
dP_t &= P_{t^-} \left[c d (S_t^\delta V_t^\theta) + \frac{c^2}{2} d \langle S^\delta V^\theta \rangle_t + \int_E  f^*(-cS_t^\delta\widetilde{Z}^\theta_t(x))\mu(dx, dt)\right]\\
&= cP_{t^-} \left[ S_t^\delta (-\alpha U_t^\theta + \beta (r^{*,1} -\theta r^{*,2}))dt-S_t^\delta Z_t^\theta dW_t -\int_E S_t^\delta \widetilde{Z}_t^\theta(x) \widetilde{\mu}(dx, dt)\right.\\
&~~~~~~ \left.+ \frac{c}{2} |S_t^\delta Z_t^\theta|^2 dt + \frac{1}{c}\int_E  f^*(-cS_t^\delta\widetilde{Z}^\theta_t(x))\mu(dx, dt)\right]\\
&= cS_t^\delta P_{t^-} \left[  -\alpha U_t^\theta +\beta (r^{*,1} -\theta r^{*,2}) + \frac{c}{2}S_t^\delta |Z_t^\theta|^2 + \frac{1}{cS_t^\delta}\int_E f^*(-cS_t^\delta\widetilde{Z}^\theta_t(x))\xi_t(x) \lambda(dx))\right]dt\\
&~~~~~~ -   Q_t dW_t - \int_E  \widetilde{Q}_t(x) \widetilde{\mu}(dx, dt) + P_{t^-}\int_E  f^*(-cS_t^\delta\widetilde{Z}^\theta_t(x))\widetilde{\mu}(dx, dt)\\
&\vcentcolon= G_t dt - Q_t dW_t -\int_E \widetilde{Q}_t(x)\widetilde{\mu}(dx, dt)+ P_{t^-}\int_E  f^*(-cS_t^\delta\widetilde{Z}^\theta_t(x))\widetilde{\mu}(dx, dt).
\end{align*}
Thanks to equation \eqref{equation18}, the $G_t$ term is bounded from above,
\begin{equation}
\label{equation19}
\begin{aligned}
G_t &\le cS_t^\delta P_{t^-}\left[ -\alpha U_t^\theta + K_2\beta(1-\theta) + \frac{|Z_t^\theta|^2}{2}\left(\frac{1}{K_1 \beta(1-\theta)}+cS_t^\delta\right)\right.\\
&~~~~~~+ \left.\int_E\left( K_1 \beta (1-\theta)f^*\left(\frac{\widetilde{Z}^\theta_t(x)}{K_1\beta(1-\theta)}\right)- \frac{1}{-cS_t^\delta} f^*(-cS_t^\delta\widetilde{Z}^\theta_t(x))\right)\xi_t(x) \lambda(dx)\right]\\
&\vcentcolon= cS_t^\delta P_{t^-}\left[-\alpha U_t^\theta + K_2\beta(1-\theta) + \frac{|Z_t^\theta|^2}{2}\left(\frac{1}{K_1 \beta(1-\theta)}+cS_t^\delta\right)\right.\\
&~~~~ \left.+ \int_E \left(h(K_1\beta(1-\theta),\widetilde{Z}^\theta_t(x)) -  h(\frac{-1}{cS_t^\delta},\widetilde{Z}^\theta_t(x))\right)\xi_t(x)\lambda(dx)\right],
\end{aligned}
\end{equation}
where $h:\R\times \R^d \to \R$ defined as $h(x,z) \vcentcolon=xf^*(z/x) = xe^{z/x}-x - z$. We need to choose $c$ such that the term next to $|Z_t^\theta|^2$ is negative, that choose $c$ such that,
$$ \frac{1}{K_1 \beta(1-\theta)} \le -c S_t^\delta.$$
Since $S_t^\delta \ge e^{||\delta||_\infty T}$, it is sufficient to set $c(\theta) \vcentcolon= -\frac{e^{-||\delta||_\infty T}}{K_1 \beta(1-\theta)}$. Computing the derivative of $h$ with respect to $x$, we get, $\partial_x h(x, z) = e^{z/x}-(z/x)e^{z/x}-1$. Studying the sign of the function $x \to e^x - xe^x - 1$ by calculating its derivative, we obtain that $e^x - xe^x - 1 \le 0$, $\forall x \in \R$. Therefore, we deduce that $\partial_x h(x, z) \le 0$, $\forall x \in \R$, that $h$ is decreasing. Hence, going back to \eqref{equation19}, we get that,
\begin{equation}
G_t \le c(\theta)S_t^\delta P_{t^-}(-\alpha U_t^\theta + K_2 \beta (1-\theta))
\le S_t^\delta P_{t^-}e^{-||\delta||_\infty T}\left(\frac{\alpha}{K_1 \beta}U_t^1 - \frac{K_2}{K_1}\right) ,
\end{equation}
where we have used, in the second inequality, the fact that,
$$U_t^\theta = U_t^1 - \theta U_t^2 = \theta (U_t^1-U_t^2) + (1-\theta)U_t^1 \le (1-\theta)U_t^1.$$
Finally, denoting $D_t = \exp\left(-e^{-||\delta||_\infty T}\int_0^t S_s^\delta \left(\frac{\alpha}{K_1 \beta}U_t^1 - \frac{K_2}{K_1}\right)ds\right)$, and introducing $P_t^D\vcentcolon = D_t P_t$, $Q^D_t \vcentcolon = Q_t D_t$ and $\widetilde{Q}^D_t \vcentcolon = \widetilde{Q}_t D_t$. Using again It\^{o}'s formula, for any stopping time $0\le t \le \tau \le T$,
$$P^D_t \ge P^D_\tau + \int_t^\tau Q^D_s dW_s + \int_t^\tau \widetilde{Q}^D_s(x) \widetilde{\mu}(dx, ds)- \int_t^\tau \int_EP_{s^-}D_s f^*(-cS_s^\delta\widetilde{Z}^\theta_s(x))\widetilde{\mu}(dx, ds).$$
Considering a localizing sequence of stopping time $\tau_n$, such that the local martingales, in the above inequality, stopped in $\tau_n$ are martingales, we obtain,
$$P_t \ge E\left[\left.P_ {\tau_n} \exp\left(-e^{-||\delta||_\infty T}\int_t^{\tau_n} S_s \left(\frac{\alpha}{K_1 \beta}U_t^1 - \frac{K_2}{K_1}\right)ds\right)\right|\mathcal{F}_t\right].$$
In view of the integrability assumptions on $U^1$ and on $V$, by the dominated convergence theorem, we can deduce that,
$$P_t \ge E\left[\left.P_ {T} \exp\left(-e^{-||\delta||_\infty T}\int_t^{T} S_s \left(\frac{\alpha}{K_1 \beta}U_t^1 - \frac{K_2}{K_1}\right)ds\right)\right|\mathcal{F}_t\right].$$
But by definition of $P$, $P_T = \exp(c(\theta)S_T^\delta V_T^\theta) = \exp(c(\theta)S_T^\delta (\overline{U}_T^1-\theta\overline{U}_T^2))$, and because $\overline{U}_T^1 \le \overline{U}_T^2$ and $c(\theta)$ is negative, we get, 
$$c(\theta)S_T^\delta V_T^\theta \ge -\frac{e^{-||\delta||_\infty T}}{K_1\beta}S_T^\delta\overline{U}_T^1.$$
Therefore, we have,
$$P_t \ge E\left[\left.\exp\left(-e^{-||\delta||_\infty T} \left(\frac{S_T^\delta}{K_1\beta}\overline{U}_T^1 + \int_t^{T} S_s \left(\frac{\alpha}{K_1 \beta}U_t^1 - \frac{K_2}{K_1}\right)ds\right)\right)\right|\mathcal{F}_t \right],$$
which implies that,
$$V_t^\theta \le -\frac{K_1\beta(1-\theta)e^{||\delta||_\infty T}}{S_t^\theta}\ln E\left[\left.\exp\left(-e^{-||\delta||_\infty T} \left(\frac{S_T^\delta}{K_1\beta}\overline{U}_T^1 + \int_t^{T} S_s \left(\frac{\alpha}{K_1 \beta}U_t^1 - \frac{K_2}{K_1}\right)ds\right)\right)\right|\mathcal{F}_t \right].$$
Taking the limit when $\theta \nearrow 1$, we finally get,
$$V_t^1 \le V_t^2.$$
\end{proof}
The following corollary is a direct consequence of the comparison result above.
\begin{Corollary}
Under assumptions \ref{Ar} and \ref{Au}, the BSDE \eqref{QEBSDEJ} has a unique solution $(V, Z, \widetilde{Z})\ D_0^{\exp}\times \mathcal{H}^{2,p} \times \mathcal{H}^{2,p}_\lambda$.
\end{Corollary}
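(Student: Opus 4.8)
Existence of a solution has already been established in the theorem preceding Proposition~\ref{comparison}, so the content of the corollary is the uniqueness assertion, which the plan is to extract directly from the comparison result. Let $(V^1, Z^1, \widetilde{Z}^1)$ and $(V^2, Z^2, \widetilde{Z}^2)$ be two solutions of \eqref{QEBSDEJ} in $D_0^{\exp}\times \mathcal{H}^{2,p}\times \mathcal{H}^{2,p}_\lambda$, both associated with the same data $(U, \overline{U}_T)$. Applying Proposition~\ref{comparison} with $(U^1, \overline{U}_T^1) = (U^2, \overline{U}_T^2) = (U, \overline{U}_T)$, the trivial inequality $(U, \overline{U}_T) \le (U, \overline{U}_T)$ yields $V_t^1 \le V_t^2$ for all $t \in [0, T]$, $P$-a.s. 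Exchanging the roles of the two triples produces the reverse inequality, so that $V^1 = V^2 \vcentcolon= V$ up to indistinguishability.

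It then remains to identify the martingale integrands. Since $V^1 = V^2$, subtracting the two dynamics in \eqref{QEBSDEJ} (the $\delta_t V_t$ and $\alpha U_t$ terms cancel) leaves
\[
0 = \beta\left(r^*\Bigl(t, \tfrac{Z_t^1}{\beta}, \xi_t\tfrac{\widetilde{Z}_t^1}{\beta}\Bigr) - r^*\Bigl(t, \tfrac{Z_t^2}{\beta}, \xi_t\tfrac{\widetilde{Z}_t^2}{\beta}\Bigr)\right)dt - (Z_t^1 - Z_t^2)\,dW_t - \int_E \bigl(\widetilde{Z}_t^1(x) - \widetilde{Z}_t^2(x)\bigr)\widetilde{\mu}(dx, dt).
\]
Writing $r^{*,i}_s \vcentcolon= r^*(s, Z_s^i/\beta, \xi_s\widetilde{Z}_s^i/\beta)$, this says that the local martingale $N \vcentcolon= \bigl((Z^1 - Z^2)\cdot W\bigr) + \bigl((\widetilde{Z}^1 - \widetilde{Z}^2)\star\widetilde{\mu}\bigr)$ coincides with the predictable finite-variation process $A_t \vcentcolon= \int_0^t \beta(r^{*,1}_s - r^{*,2}_s)\,ds$. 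Reading $N = A$ as an equality between two canonical decompositions of the same special semimartingale --- one with vanishing finite-variation part, the other with vanishing martingale part --- uniqueness of the canonical decomposition forces $N \equiv 0$ and $A \equiv 0$. Hence the predictable quadratic variation of $N$ vanishes,
\[
\int_0^T |Z_t^1 - Z_t^2|^2\,dt + \int_0^T\!\!\int_E |\widetilde{Z}_t^1(x) - \widetilde{Z}_t^2(x)|^2\,\nu(dt, dx) = 0,
\]
where the orthogonality of the continuous and purely discontinuous parts has been used, and this forces $Z^1 = Z^2$ $dt\times dP$-a.e. and $\widetilde{Z}^1 = \widetilde{Z}^2$ $\nu\times dP$-a.e. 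This proves uniqueness of the full triple $(V, Z, \widetilde{Z})$.

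Essentially all of the analytic difficulty has already been absorbed into Proposition~\ref{comparison}: the convexity of $r^*$ together with the exponential change of variables performed there is what makes the two-sided comparison available for unbounded data, which is the genuinely hard point in the quadratic-exponential setting with jumps. Given that, the only thing to verify for the corollary is that both solutions lie in $D_0^{\exp}\times \mathcal{H}^{2,p}\times \mathcal{H}^{2,p}_\lambda$, so that the integrability and dominated-convergence steps inside the comparison proof indeed apply to each of them; this is built into the statement. The subsequent identification of $(Z, \widetilde{Z})$ is then routine, resting only on the uniqueness of the canonical decomposition of a special semimartingale and on the weak representation property \eqref{representation}.
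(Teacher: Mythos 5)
Your argument is correct and follows exactly the route the paper intends: the paper offers no written proof, stating only that the corollary is a direct consequence of Proposition \ref{comparison}, and your two-sided application of the comparison result to get $V^1=V^2$, followed by the identification of $(Z,\widetilde{Z})$ via the vanishing of the predictable quadratic variation of the null local martingale, is the standard fleshing-out of that remark.
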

\section{Appendix}
\label{Appendix1}
\begin{Lemma}
\label{lem1}
Let $r(t, \eta, \psi) = \frac{1}{2}|\eta|^2+\int_E f(\psi(x))\xi_t(x)\lambda(dx)$ and $Q \in \mathcal{Q}_f$. Then, the following processes,
$$ M_t = \int_0^t \eta_s dW_s^Q,~~ M_t^\prime = \int_0^t\int_E \log(1+\psi_s(x))\widetilde{\mu}^Q(ds, dx),$$
are $Q$-martingales.
\end{Lemma}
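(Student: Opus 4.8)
The plan is to treat the two processes separately, in both cases reducing the martingale property to the finiteness of the penalty $\gamma_0(Q)$. Since $Q\in\mathcal{Q}_f$ and, for the present choice of $r$, the definition \eqref{penalty} reads $\gamma_0(Q)=E_Q\big[\int_0^T(\tfrac12|\eta_s|^2+\int_E f(\psi_s(x))\xi_s(x)\lambda(dx))\,ds\big]<\infty$ with both integrands nonnegative, I have at my disposal the two separate bounds $E_Q\big[\int_0^T|\eta_s|^2\,ds\big]\le 2\gamma_0(Q)<\infty$ and $E_Q[(f(\psi)\star\nu)_T]<\infty$. These are the only inputs I will use.

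The case of $M$ is routine. By Girsanov, $W^Q$ with $dW^Q_t=dW_t-\eta_t\,dt$ is a $Q$-Brownian motion, so $M=\eta\cdot W^Q$ is a continuous local $Q$-martingale with $\langle M\rangle_T=\int_0^T|\eta_s|^2\,ds$. The first bound gives $E_Q[\langle M\rangle_T]<\infty$, and a continuous local martingale with integrable predictable quadratic variation is a genuine (square-integrable) martingale; this disposes of $M$.

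The process $M'$ is where the work lies, and it is the main obstacle. Under $Q$ the compensator of $\mu$ is $\nu^Q=(1+\psi)\nu$, and neither the integrable-variation criterion nor the square-integrability criterion works uniformly in the jump size: for small jumps $|\log(1+\psi)|\star\nu^Q$ need not be $Q$-integrable (the measure $\lambda$ may have infinite mass near the origin, so $\psi$ is only square-integrable, not integrable), while for large jumps the quantity $(1+\psi)|\log(1+\psi)|^2$ grows strictly faster than $f(\psi)$. The remedy I propose is to split $\log(1+\psi)=\phi^s+\phi^\ell$ according to whether $|\log(1+\psi)|\le 1$ or $>1$, and to control each piece by $f$ through a different criterion. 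Writing $u=\log(1+\psi)$, so that $f(\psi)=ue^u-e^u+1=:F(u)$ with $F(0)=F'(0)=0$ and $F$ strictly convex near $0$, I would use on $\{|u|\le 1\}$ the elementary comparison $u^2\le C_1 F(u)$ (valid since $F(u)/u^2$ is continuous and positive on the compact set $\{|u|\le 1\}$) together with $1+\psi=e^u\le e$, giving $E_Q[((\phi^s)^2\star\nu^Q)_T]\le e\,C_1\,E_Q[(f(\psi)\star\nu)_T]<\infty$, so that $\phi^s\star\widetilde{\mu}^Q$ is a square-integrable $Q$-martingale. On $\{|u|>1\}$ I would use instead $|u|e^u\le C_2 F(u)$ (checked separately for $u>1$, where both sides are of order $ue^u$, and for $u<-1$, where $|u|e^u\le e^{-1}$ while $F(u)$ stays bounded away from $0$), which yields $E_Q[(|\phi^\ell|\star\nu^Q)_T]\le C_2\,E_Q[(f(\psi)\star\nu)_T]<\infty$; hence $\phi^\ell$ has $Q$-integrable variation with respect to $\nu^Q$, and $E_Q[(|\phi^\ell|\star\mu)_T]=E_Q[(|\phi^\ell|\star\nu^Q)_T]<\infty$ shows $\phi^\ell\star\widetilde{\mu}^Q=\phi^\ell\star\mu-\phi^\ell\star\nu^Q$ is a $Q$-martingale.

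Adding the two pieces, $M'=\phi^s\star\widetilde{\mu}^Q+\phi^\ell\star\widetilde{\mu}^Q$ is a $Q$-martingale, which also a posteriori justifies that $\log(1+\psi)$ is $\widetilde{\mu}^Q$-integrable and that the splitting is legitimate. The only genuinely delicate point is the small-jump regime, where integrable variation is unavailable and one must pass through the square-integrable martingale structure; the two convexity/growth comparisons between $f$ and $\log(1+\cdot)$ are elementary once the jump size is dichotomized, but choosing the threshold on $|\log(1+\psi)|$ (rather than on $|\psi|$) is what makes both estimates clean near $\psi=-1$ and near $\psi=0$ simultaneously.
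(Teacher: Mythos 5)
Your proposal is correct and follows essentially the same route as the paper: both arguments handle $M$ via the integrability of $\int_0^T|\eta_s|^2\,ds$ under $Q$, and both handle $M'$ by dichotomizing the jump size and controlling the small-jump piece in $L^2(dQ\times\nu^Q)$ and the large-jump piece in $L^1(dQ\times\nu^Q)$ through elementary comparisons with $f(\psi)$ (the paper uses $f(x)\ge\tfrac16(1+x)\log^2(1+x)$ for $x\le e^2-1$ and $(1+x)\log(1+x)\le 2f(x)$ for $x>e^2-1$, which are your two inequalities after the substitution $u=\log(1+x)$, with the cut placed at $u=2$ instead of $|u|=1$). The only cosmetic difference is that the paper keeps the regime $\psi$ near $-1$ in the square-integrable piece while you route it through the integrable-variation piece; both work.
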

\begin{proof}
Since $Q \in \mathcal{Q}_f$, we have,
\begin{equation}
\label{equation12}
E_Q\left[\int_0^T r(t, \eta_t, \psi_t) dt\right] = E_Q\left[\int_0^T \left(\frac{1}{2}|\eta_t|^2+\int_E f(\psi_t(x))\xi_t(x)\lambda(dx)\right)dt\right] < \infty.
\end{equation}
In particular, $E_Q\left[\int_0^T \frac{1}{2}|\eta_t|^2 dt \right] < \infty$, which implies that $M$ is $Q$- martingale. Now, we prove that $M^\prime$ is also a $Q$-martingale. First, note that,
$$f(x) = (1+x)\log(1+x) - x \ge \frac{1}{6}(1+x)\log^2(1+x) \ge 0,~\text{for}~ -1 \le x \le e^2-1.$$
Hence, as the RHS of \eqref{equation12} is finite, we get that
\begin{equation}
\label{equation13}
\log(1+\psi_s(x))\mathbbm{1}_{\psi_s(x) \le e^2-1} \in L^2(dQ\times \nu^Q(ds, dx)).
\end{equation}
Moreover, for $x > e^2-1$, we have,
$$(1+x)\log(1+x) \le 2((1+x)\log(1+x)-x).$$
Again, as the RHS of \eqref{equation12} is finite, we get that,
\begin{equation}
\label{equation14}
\log(1 +\psi_s(x))\mathbbm{1}_{\psi_s(x) > e^2-1} \in L^1(dQ\times \nu^Q(ds, dx)).
\end{equation}
From \eqref{equation13} and \eqref{equation14}, and using Theorem 1.8(i) in \citet{jacod2013limit}, we obtain that $M^\prime$ is $Q$-martingale.
\end{proof}
\begin{Proposition}
\label{specialCase}
Let $r(t, \eta, \psi) = \frac{1}{2}|\eta|^2+\int_E f(\psi(x))\xi_t(x)\lambda(dx)$ and $Q \in \mathcal{Q}_f$. Then, we have,
\begin{equation}
\label{equation15}
H(Q|P) = E_Q\left[\int_0^T r(t, \eta_t, \psi_t) dt\right] = E_Q\left[\int_0^T \left(\frac{1}{2}|\eta_t|^2+\int_E f(\psi_t(x))\xi_t(x)\lambda(dx)\right)dt\right].
\end{equation}
\end{Proposition}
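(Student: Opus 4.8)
The plan is to compute the relative entropy directly from the explicit exponential form of the density and to reduce every stochastic integral to its $Q$-expectation by invoking the martingale properties of Lemma \ref{lem1}. Since $\mathcal{F}_0$ is trivial, we have $H(Q|P) = H_0(Q|P) = E_Q[\log D_T]$, so the natural starting point is the logarithm of \eqref{density}:
$$\log D_T = (\eta\cdot W)_T + (\psi\star\widetilde{\mu})_T - \tfrac12\int_0^T|\eta_s|^2\,ds + \big((\ln(1+\psi)-\psi)\star\mu\big)_T.$$
The whole argument is then to take $E_Q$ of this identity and show that the two surviving deterministic-looking pieces are exactly the Brownian and jump parts of $r$.

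First I would treat the Brownian part. Writing $dW_s = dW_s^Q + \eta_s\,ds$ via Girsanov's theorem turns $(\eta\cdot W)_T - \tfrac12\int_0^T|\eta_s|^2\,ds$ into $\int_0^T\eta_s\,dW_s^Q + \tfrac12\int_0^T|\eta_s|^2\,ds$. By Lemma \ref{lem1} the process $M=\int_0^{\cdot}\eta_s\,dW_s^Q$ is a $Q$-martingale, so its $Q$-expectation vanishes and this contribution is precisely $\tfrac12\,E_Q[\int_0^T|\eta_s|^2\,ds]$.

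Next I would simplify the jump part. Expanding $\psi\star\widetilde\mu = \psi\star\mu - \psi\star\nu$ and $(\ln(1+\psi)-\psi)\star\mu = \ln(1+\psi)\star\mu - \psi\star\mu$, the two $\psi\star\mu$ terms cancel, leaving $\ln(1+\psi)\star\mu - \psi\star\nu$. I then decompose $\ln(1+\psi)\star\mu = \ln(1+\psi)\star\widetilde\mu^Q + \ln(1+\psi)\star\nu^Q$ against the $Q$-compensator $\nu^Q=(1+\psi)\nu$. Again by Lemma \ref{lem1} the process $M'=\ln(1+\psi)\star\widetilde\mu^Q$ is a $Q$-martingale, so only the compensator term survives under $E_Q$, and combining it with $-\psi\star\nu$ gives
$$E_Q\Big[\int_0^T\!\!\int_E\big((1+\psi_s(x))\ln(1+\psi_s(x))-\psi_s(x)\big)\,\nu(ds,dx)\Big] = E_Q\Big[\int_0^T\!\!\int_E f(\psi_s(x))\,\xi_s(x)\lambda(dx)\,ds\Big],$$
using $\nu(ds,dx)=\xi_s(x)\lambda(dx)\,ds$ and the definition \eqref{f} of $f$. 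Adding the Brownian and jump contributions yields exactly $\gamma_0(Q)=E_Q[\int_0^T r(t,\eta_t,\psi_t)\,dt]$, which is the claim.

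The only delicate point, and the one I expect to require care, is integrability: the splittings above and the vanishing of the two martingale expectations are legitimate only because the relevant integrands are $Q$-integrable. This is precisely what $Q\in\mathcal{Q}_f$ secures, since finiteness of $E_Q[\int_0^T r\,dt]$ forces both $E_Q[\int_0^T|\eta_s|^2\,ds]<\infty$ and the $L^1$/$L^2$ bounds on $\ln(1+\psi)$ that make $M$ and $M'$ genuine $Q$-martingales, exactly as established in Lemma \ref{lem1}. I expect this integrability bookkeeping, rather than the algebra, to be the main obstacle.
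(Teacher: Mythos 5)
Your overall strategy is the same as the paper's (take the logarithm of the density \eqref{density}, rewrite everything in terms of the $Q$-Brownian motion and the $Q$-compensated measure, and kill the martingale terms with Lemma \ref{lem1}), and your treatment of the Brownian part is correct. However, there is a genuine gap in the jump part: the term-by-term expansions $\psi\star\widetilde\mu = \psi\star\mu - \psi\star\nu$ and $(\ln(1+\psi)-\psi)\star\mu = \ln(1+\psi)\star\mu - \psi\star\mu$ are not legitimate in general. The measure $\lambda$ is only required to satisfy $\int_E 1\wedge|x|^2\,\lambda(dx)<\infty$, so it may put infinite mass near the origin, and $\psi$ is only known to be square-integrable with respect to $\nu$, not integrable. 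Consequently $|\psi|\star\mu$, $|\psi|\star\nu$ and $|\ln(1+\psi)|\star\mu$ can all be infinite, in which case, by the paper's own convention \eqref{integral}, each of the quantities you are cancelling equals $+\infty$ and the ``cancellation'' is an $\infty-\infty$ manipulation. Your closing remark misdiagnoses this: $Q\in\mathcal{Q}_f$ only controls the \emph{combination} $f(\psi)=(1+\psi)\log(1+\psi)-\psi$, which behaves like $\psi^2/2$ for small $\psi$; it gives no control on the linear quantities $\psi$ or $\log(1+\psi)$ integrated separately against $\mu$ or $\nu$, which is exactly where the splitting fails.

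The paper's proof supplies the missing ingredient: it truncates to $\psi_{m,s}(x)=\psi_s(x)\mathbbm{1}_{\psi_s(x)\le m}\mathbbm{1}_{|x|\ge 1/m}$. Since $\lambda(\{|x|\ge 1/m\})<\infty$, the measure $\mu$ has a.s.\ finitely many atoms on that region and $\psi_m$ is bounded, so every intermediate stochastic integral is genuinely finite and your algebra becomes valid for $\psi_m$. One then passes to the limit $m\to\infty$: the compensator term converges to $\int_0^T\int_E f(\psi_s(x))\,\nu(ds,dx)$ by monotone convergence, and the martingales $M'_{m,T}$ converge to $M'_T$ in $L^1(Q)$ via the $L^2$/$L^1$ split of $\log(1+\psi)$ from Lemma \ref{lem1}. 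Without this truncation-and-limit step (or an equivalent localization), your argument does not go through; with it, it is exactly the paper's proof.
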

\begin{proof}
Let $Q \in \mathcal{Q}_f$ with corresponding $(\eta, \psi)$. We introduce the following sequence of processes $(\psi_m)_{m\in \N^*}$ defined as:
$$\psi_{m,s}(x) = \psi_s(x) \mathbbm{1}_{\psi_s(x) \le m} \mathbbm{1}_{|x| \ge 1/m}.$$ It is clear that $\psi_m \in L^2(dQ\times \nu^Q(ds, dx))$. Developing the logarithm of Radon-Nikodym derivative of $Q$ w.r.t $P$ gives $Q$-a.s.:
\begin{equation}
\label{equation16}
\begin{aligned}
\log\left(\frac{dQ}{dP}\right) &= \underset{m\to \infty}{\lim} \log\left\{\mathcal{E}\left(\int_0^T \eta_s dW_s + \int_0^T \int_E \psi_{m,s}(x)\widetilde{\mu}(ds, dx)\right)\right\}\\
&= \underset{m\to \infty}{\lim} \left\{\int_0^T \eta_s dW_s -\frac{1}{2}\int_0^T |\eta_s|^2 ds + \int_0^T\int_E \psi_{m,s}(x)\widetilde{\mu}(ds, dx)\right.\\
&~~~~\left. + \int_0^T \int_E(\log(1 + \psi_{m,s}(x)) - \psi_{m,s}(x))\mu(ds, dx)\right\}\\
&=\underset{m\to \infty}{\lim} \left\{ \int_0^T \eta_s dW_s^Q + \frac{1}{2}\int_0^T |\eta_s|^2 ds + \int_0^T \int_E \psi_{m,s}(x) \widetilde{\mu}^Q(ds, dx)\right.\\
&~~~~\left. + \int_0^T \int_E (\log(1+\psi_{m,s}(x))-\psi_{m,s}(x))\widetilde{\mu}^Q(ds, dx)\right.\\
&~~~~+\left.\int_0^T \int_E\left[\psi_{m,s}(x)\psi_s(x) + (1 + \psi_s(x))\left(\log(1+\psi_{m,s}(x))-\psi_{m,s}(x)\right)\nu(ds, dx)\right]\right\}\\
&= \underset{m\to \infty}{\lim} \left\{\int_0^T \eta_s dW_s^Q + \frac{1}{2}\int_0^T |\eta_s|^2 ds + \int_0^T \int_E \log(1+\psi_{m,s}(x))\widetilde{\mu}^Q(ds, dx)\right.\\
&~~~~\left. +\int_0^T \int_E \left[(1 + \psi_s(x))\log(1+\psi_{m,s}(x))-\psi_{m,s}(x)\right]\nu(ds, dx)\right\},
\end{aligned}
\end{equation}
where we used from the second to the third inequality that, by the definition of the process $\psi_m$, $(1+\psi_s)(\log(1+\psi_{m,s})-\psi_{m,s}) \in L^1(\nu(ds, dx))$ and $\psi_{m}\psi \in L^1(\nu(ds, dx))$. In particular, $\int_0^t \int_E (\log(1 + \psi_{m,s}(x))- \psi_{m,s}(x) \widetilde{\mu}^Q(ds, dx)$ is a well defined. Lemma \ref{lem1} above insures that the following processes:
$$ M_t \vcentcolon = \int_0^t \eta_s dW_s^Q, ~~M_{t}^\prime \vcentcolon= \int_0^t \int_E \log(1 + \psi_s(x)) \widetilde{\mu}^Q(ds, dx), ~~~M_{m, t}^\prime \vcentcolon= \int_0^t \int_E \log(1 + \psi_{m,s}(x)) \widetilde{\mu}^Q(ds, dx),$$
are $Q$-martingales. Moreover, we also have that $M_{m,T}^\prime$ converges to $M^\prime_T$ in $L^1(Q)$. Indeed, decomposing $M^\prime$ and $M^\prime_m$ as in Lemma \ref{lem1} in the following way,
$$M^\prime_{m,t} = \widehat{M}_{m,t} + \widetilde{M}_{m, t},$$ 
where,
$$ \widehat{M}_{m,t} \vcentcolon= \int_0^t \int_E\log(1 +\psi_{s,m}(x))\mathbbm{1}_{\psi_{s,m}(x) \le e^2-1},~~\widetilde{M}_{m,t} \vcentcolon=\int_0^t \int_E \log(1 +\psi_{s,m}(x))\mathbbm{1}_{\psi_{s,m}(x) > e^2-1}.$$
We have the positive (resp. negative) part of $\log(1 +\psi_{m,s}(x))\mathbbm{1}_{\psi_{s,m}(x) \le e^2-1}$ increases (resp. decreases) to the positive (resp. negative) part of $\log(1 +\psi_s(x))\mathbbm{1}_{\psi_{s}(x) \le e^2-1}$ as $m$ goes to infinity and by the monotone convergence we deduce that $\widehat{M}_{m, T}$ convergence to $\hat{M}_T$ in $L^1(Q)$. Using the same argument, we have also $\widetilde{M}_{m, T}$ converges to $\widetilde{M}_T$ in $L^1(Q)$ as m goes to infinity. Hence, we obtain that $M^\prime_{m,T}$ converges to $M^\prime_T$ as $m \to \infty$. By passing to a subsequence, we may assume that $M^\prime_{m,T}$ converges to $M^\prime_T$ $Q$-a.s. Finally, by Monotone convergence theorem, the last term in \eqref{equation16} also converges to $\int_0^T \int_E f(\psi_s(x)) \nu(ds, dx)$. Consequently, \eqref{equation16} becomes,
\begin{align*}
\log\left(\frac{dQ}{dP}\right) &=  \int_0^T \eta_s dW_s^Q + \frac{1}{2}\int_0^T |\eta_s|^2 ds + \int_0^T \int_E \log(1+\psi_{s}(x))\widetilde{\mu}^Q(ds, dx)\\
&~~~~ +\int_0^T \int_E f(\psi_s(x))\nu(ds, dx).
\end{align*}
Taking the expectation under $Q$ and using the fact that $M$ and $M^\prime$ are martingales yields \eqref{equation15}.
\end{proof}

\bibliographystyle{apa}
\bibliography{biblio}

 \end{document}